\documentclass[12pt]{article}

\usepackage[english]{babel}

\usepackage[letterpaper,top=2cm,bottom=2cm,left=3cm,right=3cm,marginparwidth=1.75cm]{geometry}

\usepackage{amsmath}
\usepackage{graphicx}
\usepackage[colorlinks=true, linkcolor=black, allcolors=blue]{hyperref}
\usepackage{amssymb,amsthm}
\usepackage{amsfonts,graphicx,psfrag}
\usepackage{graphicx}
\usepackage{float}
\usepackage{color}
\usepackage{subcaption}

\newcommand{\fe}{\mathfrak{e}}

\newcommand{\rr}{\mathbb{R}}

\newtheorem{thm}{Theorem}[section]
\newtheorem{defi}{Definition}[section]
\newtheorem{prop}{Proposition}[section]
\newtheorem{lem}{Lemma}[section]
\newtheorem{cor}{Corollary}[section]
\newtheorem{rem}{Remark}[section]

\def\bea{\begin{eqnarray}}
\def\eea{\end{eqnarray}}
\def\no{\noindent}
\def\bs{\bigskip}
\def\Sp{{\mathrm {Sp}}}
\def\ga{{\gamma}}

\begin{document}
\title{Relative Periodic Orbits in the Gutzwiller-type Anisotropic Kepler Problem and $n$-body Problem}

\author{Xijun Hu\textsuperscript{1}
\quad Yuwei Ou\textsuperscript{1}
\quad Zhiwen Qiao\textsuperscript{1}
\quad Yifan Yang\textsuperscript{1}
\\ \\
\textsuperscript{1} School of Mathematics, Shandong University, 27 Shanda Nanlu,\\
250100 Jinan, P. R. China
\thanks{E-mails: xjhu@sdu.edu.cn, ywou@sdu.edu.cn, qiaozw@mail.sdu.edu.cn, yangyf@mail.sdu.edu.cn.}
}

\date{}

\maketitle

\begin{abstract}
We study the relative periodic orbits in the Gutzwiller-type anisotropic Kepler problem, which is a generalized model derived from the classical Gutzwiller anisotropic Kepler problem. By reducing the rotational symmetry of the $z$-axis, we obtain a reduced system with two degrees of freedom and its energy surface forms a compact and regular three-sphere in a certain parameter range. Combining the estimation of the Seifert rotation number of the planar Kepler orbit and the CHHL formula introduced in \cite{CHHL23}, we prove that this system admits infinitely many periodic orbits on every compact regular energy surface. This model can be applied to the $(1+2n)$-body problem to find infinitely many relative periodic orbits with hip-hop symmetry as well as relative periodic orbits in the $n$-pyramidal problem.
\end{abstract}

\bs

\no{\bf AMS Subject Classification:} 70F10, 37J46, 53D12, 53D10

\bs

\no{\bf Key Words: $n$-body problem, periodic orbits, hip-hop symmetry, rotation number, Gutzwiller anisotropic }

\tableofcontents
\section{Introduction and main results}
For the equal-mass $2n$-body problem with $n\geq2$, there exist solutions with hip-hop symmetry. With the center of mass of the $2n$-body system at the origin, such solutions satisfy the following condition: the $2n$ bodies are always partitioned into two groups of $n$, each occupying the vertices of a regular $n$-gon (or the endpoints of a line segment when $n=2$). These configurations lie in planes perpendicular to the $z$-axis, and their orthogonal projections onto the $(x,y)$-plane together form a regular $2n$-gon. These solutions were first numerically observed by Davies, Truman, and Williams \cite{davies1983classical}. Later, Chenciner and Venturelli \cite{Chenciner2000} used a variational approach to obtain collisionless $\mathbb{Z}/4\mathbb{Z}$-symmetric action minimizers and coined the term \emph{hip-hop} to describe orbits whose configuration oscillates between the central configuration of the square and that of the tetrahedron. This framework was subsequently extended to general $2n$-body systems. Specifically, Terracini and Venturelli \cite{Terracini2007} proved the existence of collisionless, non-planar and non-homographic trajectories with hip-hop symmetry for weak-force potentials $r^{-\sigma}$ by variational approach with topological constraints. Using an analytic continuation argument, Barrab\'{e}s, Cors, Pinyol, and Soler \cite{Barrabes2006,Barrabes2010} proved the existence of families of solutions with hip-hop symmetry that approach planar relative equilibria and highly eccentric elliptic relative equilibria in the Newtonian setting. 

This class of solutions can be generalized to the $(1+2n)$-body problem, where a central mass is introduced in addition to the $2n$ equal-mass bodies. There exists an invariant subsystem in which the $2n$ equal-mass bodies exhibit hip-hop symmetric motion, while the central body remains stationary at the system’s center of mass.

The Hamiltonian of the hip-hop $(1+2n)$-body problem in reduced form is given by
\begin{equation}\label{equ: hiphop ham}
	H_1(p_r,p_z,r,z)=\frac{1}{2}(p_r^{2}+p_z^{2})+V_1(r,z),
\end{equation}
with the corresponding potential
\begin{equation}
	\begin{aligned}
		V_1(r,z)&=\dfrac{\varpi^{2}}{2r^{2}}-\frac{1}{2}\sum_{k=1}^{2n-1}\dfrac{1}{\sqrt{4r^{2}\sin^{2} (\frac{k\pi}{2n})+((-1)^k -1)^2 z^{^{2}}}}-\dfrac{m_0}{\sqrt{r^{2}+z^{2}}}\\
		&=\frac{\varpi^{2}}{2r^{2}}-\frac{a(n)}{r}-\frac{1}{4}\sum_{k=1}^{n}\frac{b_{n,k}}{\sqrt{r^2+b_{n,k}^2z^2}}-\frac{m_0}{\sqrt{r^{2}+z^{2}}}.
	\end{aligned}
\end{equation}
Here, $(p_r,p_z,r,z)\in \mathbb{R}^{2}\times\mathbb{R}^{+}\times\mathbb{R}$ and $a(n)=\frac{1}{4}\sum_{k=1}^{n-1}\csc(\frac{k\pi}{n})$, $b_{n,k}=\csc(\frac{(2k-1)\pi}{2n})$. The parameter $\varpi$ denotes the angular momentum about $z$-axis, while $m_0$ stands for the mass ratio of the central mass to one of the other $2n$ masses. See Section \ref{sec: $1+2n$} for more details.

It should be specially pointed out that the periodic orbits of system \eqref{equ: hiphop ham} correspond to a family of relative periodic orbits in the system before reduction of angular momentum.

This Hamiltonian is a special case of the following reduced generalized Gutzwiller anisotropic Kepler problem,
\begin{equation}\label{equ: reduced Ham}
	H(p_r,p_z,r,z)=\frac{1}{2}(p_r^{2}+p_z^{2})+\frac{\varpi^2}{2r^2}-\frac{A_0}{r}-\sum_{i=1}^{\mathfrak{n}}\frac{A_i}{\sqrt{r^2+B_iz^2}},
\end{equation}
where $A_0\geq0$ and $A_i,B_i>0,\,i=1,\cdots,\mathfrak{n}$. We call this system the Gutzwiller-type anisotropic Kepler problem. Let $\omega=dp_r\wedge dr+dp_z\wedge dz$ be the standard symplectic form. The Hamiltonian vector field $X_{H}$ is determined by $dH=\omega(\cdot,X_{H})$ and the equation $\dot \zeta=X_{H}(\zeta)$ can be written as
\begin{equation}\label{equ: reduced Ham sys}
	\left\{\begin{aligned}
		&\dot{r}=p_r ,\quad  \dot{p_r}=\frac{\varpi^{2}}{r^{3}}-\frac{A_0}{r^2}-\sum_{i=1}^{\mathfrak{n}}\frac{A_ir}{(r^2+B_iz^2)^{3/2}},\\
		&\dot z =p_z ,\quad \dot{p_z}=-\sum_{i=1}^{\mathfrak{n}}\frac{A_iB_iz}{(r^2+B_iz^2)^{3/2}}.
	\end{aligned}
	\right.
\end{equation}

When $A_0=0$ and $\mathfrak{n}=1$, this Hamiltonian corresponds to the classical reduced Gutzwiller anisotropic Kepler problem,
\begin{equation}\label{aniso Kepler ham}
	H(p_r,p_z,r,z)=\frac{1}{2}(p_r^{2}+p_z^{2})+\frac{\varpi^2}{2r^2}-\frac{A_1}{\sqrt{r^2+B_1z^2}},
\end{equation}
where $B_1>0$ is the anisotropic parameter. This model was first studied by Gutzwiller \cite{GMC73,GMC1990}, and describes the motion of an electron in semiconductors with donor impurities in $\mathbb R^3$. When $B_1=1$, it corresponds to the reduced spatial Kepler problem. Guirao, Llibre and Vera \cite{GLV13} and Llibre and Makhlouf \cite{LM12} studied the periodic orbits by treating this system as a perturbed Kepler problem. Recently, Sakaguchi and Shibayama \cite{SakaShib26}, using a minimax approach, obtained a non-trivial symmetric periodic orbit on each compact energy surface for both the system \eqref{aniso Kepler ham} with $B_1>1$ and the isosceles three-body problem. Many researchers also considered the general case of the anisotropic Kepler problem containing the Gutzwiller-type anisotropic Kepler problem. Barutello, Terracini and Verzini \cite{BTG14} studied the exitence of the entire parabolic trajectories as a minimizer of the Lagrangian action functional for anisotropic Kepler problems in $\mathbb{R}^d$ with the $(-\alpha)$-homogeneous potentials. In \cite{HY18}, Hu and Yu developed an index theory for zero-energy solutions of the planar $(-\alpha)$-homogeneous anisotropic Kepler problem and established the relations between the Morse indices of zero-energy solutions and their oscillatory behaviors. Yu \cite{Y25} established the asymptotic properties for the positive-energy solutions of the anisotropic Kepler problem in $\mathbb R^d$ with a $(-\alpha)$-homogeneous potential, and proved certain existence results of hyperbolic and bi-hyperbolic solutions.  

In \cite{HOQ26}, the first three authors proved that the system \eqref{aniso Kepler ham} possesses infinitely many periodic orbits on any fixed compact regular energy surface for $B_1\in(0,1]$. In this paper, we prove that the system \eqref{equ: reduced Ham sys} admits infinitely many periodic orbits on its compact regular energy surface under a technical condition. As a special case, for the system \eqref{aniso Kepler ham}, the above conclusions remain valid when $B_1\in[1,+\infty)$. This generalizes the result in \cite{HOQ26}. Furthermore, this result can also be applied to the hip-hop $(1+2n)$-body problem and the $n$-pyramidal problem. Specifically, Proposition \ref{topo of energy surfaces 1} implies that $2h\varpi^2$ is an essential parameter of \eqref{equ: reduced Ham}. When $-C^2<2h\varpi^2<-A_0^2$ with $C=\sum_{i=0}^{\mathfrak{n}}A_i$, the energy surface $\mathfrak{M}=H^{-1}(h)$ forms a compact regular three-sphere. Since $H$ is a mechanical Hamiltonian, $\mathfrak M$ is a contact-type hypersurface in $\mathbb R^4$ \cite[Theorem 4.8]{HZ94}. There exists a contact $1$-form $\lambda$ satisfying $d\lambda=\omega$, and $\lambda \wedge d\lambda$ serves as a volume form on $\mathfrak M$. The Reeb vector field $R$, defined by $\lambda(R)=1$ and $d\lambda(R,\cdot)\equiv0$, preserves the contact structure $\xi:=\ker \lambda$. Moreover, the corresponding Reeb flow shares identical dynamics with $X_{H}$ up to reparametrization. There exists a special orbit $\zeta_p\subset\mathfrak{M}\cap\{p_z=z=0\}$, namely the planar Kepler orbit, which can be solved explicitly:
$$r_p(t)=\frac{\varpi^2/C}{1+\fe\cos\theta_{p}(t)},$$
where $\fe=(1+2h\varpi^2/C^2)^{1/2}$ denotes the eccentricity. The angle variable $\theta_{p}(t)$ obeys $r_p^2(t)\dot{\theta_{p}}=\varpi$ with the initial condition $\theta_{p}(0)=0$. Taking $\zeta_p$ as a Reeb orbit of the Reeb vector field $R$, we can derive the relation between the contact volume $\mathrm{vol}(\mathfrak M,\lambda):=\int_{\mathfrak M} \lambda \wedge d\lambda$ and the minimal Reeb period $T_p:=\int_{\zeta_p}\lambda=\int_{\Upsilon}dp_r\wedge dr$ of $\zeta_p$. Combined with the Seifert rotation number of $\zeta_p$, we can use the CHHL formula introduced in \cite{CHHL23} (see Section \ref{sec: Reeb flow and Rotation number}) to verify that infinitely many periodic orbits exist on the energy surface $\mathfrak M$.

The Seifert rotation number of a periodic orbit $\zeta\subset \mathfrak M$ is defined as $\hat \rho(\zeta)=\hat i(\zeta)/2\in \mathbb R$, where $\hat i(\zeta)$ is the mean index of the transverse flow determined by the Seifert surface spanned by $\zeta$. For an unknotted periodic orbit $\zeta$, $\hat \rho(\zeta)+1$ is equal to the usual rotation number $\rho(\zeta)$ considered in \cite{HLOYS23}. In particular, the rotation number $\hat \rho_p:=\hat \rho(\zeta_p)$ of the planar Kepler orbit $\zeta_p\subset \mathfrak M\cap \{p_z=z=0\}$ is determined by the following equation
\begin{equation}\label{equ: Hill stability equation}
	\ddot{x}+x+\beta(1+\fe\cos\theta)^{-1}x=0,
\end{equation}
named the Hill stability equation. The equation \eqref{equ: Hill stability equation} plays an important role in the stability of the homographic solutions in the $n$-body problem, see \cite{HOT23}. Moreover, the Hill stability equation \eqref{equ: Hill stability equation} is a special Ince's equation, see (7.3) and (7.27) in \cite{MW1966}.

Based on the volume computation in Section \ref{sec: the volume estimate} and the rotation number estimates in Section \ref{sec: the degenerate curves } and CHHL formula, we derive the following result.
\begin{thm}\label{thm: estimate of rotation number}
	Assuming $A_0\geq0,A_i,B_i>0,\,i=1,\cdots,\mathfrak{n}$ and $-C^2<2h\varpi^2<-A_0^2$ with $C=\sum_{i=0}^{\mathfrak{n}}A_i$, let  $\zeta_p\subset\mathfrak{M}\cap\{p_z=z=0\}$ be the planar Kepler orbit on $\mathfrak{M}$ with minimal Reeb period $T_p>0$ and Seifert rotation number $\hat{\rho}_p$. When $\sum_{i=1}^{\mathfrak{n}}A_iB_i\geq\sum_{i=0}^{\mathfrak{n}}A_i$, we have
		$$\hat \rho_{p}\geq T_p^2/\mathrm{vol}(\mathfrak{M},\lambda),$$
	equality holds if and only if $A_0=0$ and $B_i=1,\,i=1,\cdots,\mathfrak{n}$. Moreover, the energy surface $\mathfrak M$ admits infinitely many periodic orbits.
\end{thm}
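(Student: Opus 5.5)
We prove the inequality by computing $T_p$, bounding $\mathrm{vol}(\mathfrak M,\lambda)$ from below, and bounding $\hat\rho_p$ from below through the Hill stability equation \eqref{equ: Hill stability equation}. The existence of infinitely many periodic orbits then follows from the CHHL formula. That formula says the following for a dynamically convex, or more generally suitable, Reeb flow on $S^3$ with an unknotted orbit $\zeta_p$ spanning a disk-like global section: if $\hat\rho_p \neq T_p^2/\mathrm{vol}(\mathfrak M,\lambda)$, then there are infinitely many periodic orbits. Hence we only need the inequality together with strictness. In the equality case $A_0=0$, $B_i=1$ the system is the spatial Kepler problem restricted to angular momentum $\varpi$, in which every orbit is periodic, so infinitely many orbits exist trivially.

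\textbf{Step 1: the period.} The planar Kepler orbit has $T_p=\int_\Upsilon dp_r\wedge dr$, which is the action of the radial motion in the effective potential $\varpi^2/(2r^2)-C/r$ at energy $h$. The standard computation gives $T_p = 2\pi\big(C/\sqrt{-2h}-\varpi\big)$.

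\textbf{Step 2: the volume.} We have $\mathrm{vol}(\mathfrak M,\lambda)=\int_{H\le h}\omega^2/2$, which is the phase-space volume of the sublevel set. Integrating out the momenta gives $\pi\int_{\{V\le h\}}(h-V)\,dr\,dz$, with $V$ the potential in \eqref{equ: reduced Ham}. Since $\sqrt{r^2+B_iz^2}\ge r$, we have $V(r,z)\ge V(r,0)-\sum_i A_i\big(1/r-1/\sqrt{r^2+B_iz^2}\big)$. We compare each term with the isotropic Kepler case. The substitution $z=w/\sqrt{B_i}$ shows that a larger $B_i$ shrinks the $z$-extent, but the bound has to be taken termwise. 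I would first establish the clean formula for the Kepler case $A_0=0$, $B_i=1$: there, equality in the CHHL relation gives $\mathrm{vol}=T_p^2/\hat\rho_p^{\mathrm{Kep}}$. Then I would bound $\mathrm{vol}$ in general via a scaling argument in $z$ and Jensen/Hölder applied to the weights $A_i/C$.

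\textbf{Step 3: the rotation number.} Linearizing \eqref{equ: reduced Ham sys} in the $z$-direction along $\zeta_p$ gives $\ddot z=-\big(\sum_i A_iB_i\big)z/r_p^3$. After reparametrizing by the true anomaly $\theta$ with $x=z/r_p$, this becomes \eqref{equ: Hill stability equation} with $\beta = \sum_i A_iB_i/C - 1$. The hypothesis $\sum A_iB_i\ge C$ is therefore exactly $\beta\ge0$. The Seifert rotation number is $\hat\rho_p=\rho(\beta,\fe)$, the rotation number of the Hill equation over one period $2\pi$, with a shift of $1$ accounting for the Seifert framing. By Sturm comparison, $\rho$ is nondecreasing in $\beta$. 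It is also strictly increasing wherever the solution avoids degeneracy, and the degenerate curves of Section \ref{sec: the degenerate curves } control it. For $\beta=0$ we get the Kepler value. For $\beta>0$ we need a quantitative lower bound such as $\rho\ge\sqrt{1+\beta\,\overline{(1+\fe\cos\theta)^{-1}}}$, obtained from the averaged potential using $(1+\fe\cos\theta)^{-1}$ of mean $(1-\fe^2)^{-1/2}$, or from the degenerate-curve estimates.

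\textbf{Step 4: combining.} Write the ratio $T_p^2/\mathrm{vol}$ in terms of $2h\varpi^2/C^2$ (equivalently $\fe$) and $\beta$-type quantities. Then check the inequality against the lower bound from Step 3.

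The main obstacle is matching the volume estimate of Step 2 with the rotation estimate of Step 3. The volume depends on the individual $B_i$ and $A_0$, whereas the rotation number depends only on $\sum A_iB_i$. I would try to show that $\mathrm{vol}$ is at least the volume of an effective single-term problem in which $A_0$ is absorbed with $B=1$ and the rest uses the effective $B=\sum A_iB_i/C$. The key fact needed is that the sublevel-set volume increases when the $B_i$ are replaced by a weighted mean, which should follow from convexity in $1/\sqrt{B}$. This reduces the problem to the classical Gutzwiller case treated in \cite{HOQ26} with $B\ge1$, where the monotonicity in $\beta$ gives strict inequality unless $\beta=0$ and $A_0=0$.
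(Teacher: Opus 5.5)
\textbf{Genuine gap: the rotation-number estimate in Step 3.} Your reduction matches the paper's. Jensen with weights $A_i/C$ compares $V$ with $\hat V_0=\frac{\varpi^2}{2r^2}-C\big(r^2+(D/C)z^2\big)^{-1/2}$, where $D=\sum_{i\ge1}A_iB_i$. This potential has the same planar orbit and the same transverse coefficient $D/r_p^3$, so everything reduces to the single-term Gutzwiller problem with $B=D/C=1+\beta\ge 1$. What then remains is
\[
\hat\rho_{p,\beta,\mathfrak e}>\sqrt{1+\beta}=\hat\rho_{p,\beta,0}\qquad\text{for all }\beta>0,\ 0<\mathfrak e<1 .
\]
This is a comparison in the eccentricity at fixed $\beta$. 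It is the main analytic content of the paper, and your Step 3 does not supply it.

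Sturm monotonicity in $\beta$ only gives $\hat\rho_p\ge 1$. Comparing with the minimum $1+\beta/(1+\mathfrak e)$ of the coefficient only gives $\sqrt{1+\beta/(1+\mathfrak e)}<\sqrt{1+\beta}$.

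The averaged bound you suggest, $\sqrt{1+\beta(1-\mathfrak e^2)^{-1/2}}$, is false for large $\beta$. In the variables $d\theta=(1+\mathfrak e\cos\theta)^{1/2}d\tau$ the $a_{21}$-equation has coefficient $-\beta+O(1)$. Sturm comparison therefore pins $\hat\rho_{p,\beta,\mathfrak e}$ between $\frac{\mathcal T_{\mathfrak e}}{2\pi}\sqrt{\beta+c_1}$ and $\frac{\mathcal T_{\mathfrak e}}{2\pi}\sqrt{\beta+c_2}$, with $c_1,c_2$ depending only on $\mathfrak e$. Cauchy--Schwarz gives $\frac{\mathcal T_{\mathfrak e}}{2\pi}<\big(\frac{1}{2\pi}\int_0^{2\pi}\frac{d\theta}{1+\mathfrak e\cos\theta}\big)^{1/2}=(1-\mathfrak e^2)^{-1/4}$, so for large $\beta$ the true value lies below your bound.

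Your fallback, ``the classical Gutzwiller case treated in \cite{HOQ26} with $B\ge1$'', is not available. \cite{HOQ26} covers only $B_1\in(0,1]$ (Theorem~\ref{thm: estimate of rotation number 2.2}), and $B_1>1$ is precisely what is new here. Nor can a soft monotonicity argument work: $\hat\rho_{p,\beta,\mathfrak e}-\sqrt{1+\beta}$ is \emph{negative} for $\beta\in(-3/4,0)$.

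The paper closes this in two overlapping ranges. For $\beta\in(0,6/\mathfrak e+3]$ it imports the degenerate-curve analysis of \cite{HLOQS26} (Theorem~\ref{thm: comparison mean index 3}). For large $\beta$ it uses the blow-up $q=(1+\mathfrak e\cos\theta)^{1/2}$, $d\theta=q\,d\tau$, together with conjugation by $\mathrm{diag}(q^{1/2},q^{-1/2})$. Then $a_{21}$ satisfies $\frac{d^2a_{21}}{d\tau^2}=f(q^2)a_{21}$ with the explicit bound $f\le-(\beta+c(\mathfrak e))$. Sturm comparison over the longer period $\mathcal T_{\mathfrak e}>2\pi$ (Lemma~\ref{lem: Perod}) yields $\hat i\ge\frac{\mathcal T_{\mathfrak e}}{\pi}\sqrt{\beta+c(\mathfrak e)}>2\sqrt{1+\beta}$ once $\beta>\beta_*(\mathfrak e)$, and $\beta_*(\mathfrak e)<6/\mathfrak e+3$. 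The gain comes from $\mathcal T_{\mathfrak e}>2\pi$, not from averaging the coefficient.

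\textbf{Smaller issues.}
\begin{itemize}
\item In Step 2 your ``key fact'' is backwards. Replacing the $B_i$ by their weighted mean \emph{raises} the potential pointwise and \emph{shrinks} the sublevel set. The correct statement is Jensen for the convex map $x\mapsto(r^2+x)^{-1/2}$ with $x_0=0$, $x_i=B_iz^2$, which gives $V\le\hat V_0$ and hence $\{\hat H_0\le h\}\subset\{H\le h\}$.
\item The term $A_0/r$ enters with $B_0=0$, not $B=1$. That is exactly why the effective coefficient $D/C$ matches the $\beta$ of the Hill equation.
\item The single-term volume $T_p^2/\sqrt B$ is a direct computation (Theorem~\ref{thm: vol of M}, or your scaling $z=w/\sqrt B$ from the Kepler case). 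It does not come from ``equality in the CHHL relation'', which is forced only when there are exactly two simple orbits.
\item $\hat\rho_p$ is the Hill rotation number itself; the shift by $1$ relates $\rho_p$ and $\hat\rho_p$.
\item When $\beta=0$ but $A_0>0$ or the $B_i$ are not all equal, one has $\hat\rho_p=1$. Strictness then comes from the strict Jensen inequality for the volume, not from monotonicity in $\beta$. Your equality discussion misses the case $A_0=0$, $\beta=0$ with unequal $B_i$.
\end{itemize}
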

\begin{rem}
	The equality condition given in Theorem \ref{thm: estimate of rotation number} implies that the system reduces to the reduced spatial Kepler problem. The relation between the contact volume $\mathrm{vol}(\mathfrak M,\lambda)$ and the minimal Reeb period $T_p$ and the Seifert rotation number $\hat \rho_p$ of $\zeta_p$ has been studied in \cite{HQY25}.
\end{rem}
Theorem \ref{thm: estimate of rotation number} implies the following corollary
\begin{cor}
	For Hamiltonian \eqref{aniso Kepler ham}, assuming $A_1>0$ and $-A_1^2<2h\varpi^2<0$, let $\mathfrak{M}=H^{-1}(h)$ and $\zeta_p\subset\mathfrak{M}\cap\{p_z=z=0\}$ be the planar Kepler orbit on $\mathfrak{M}$ with minimal Reeb period $T_p>0$ and Seifert rotation number $\hat{\rho}_p$. When $B_1\geq1$, we have
	$$\hat \rho_{p}\geq T_p^2/\mathrm{vol}(\mathfrak{M},\lambda),$$
	equality holds if and only if $B_1=1$. Moreover, the energy surface $\mathfrak M$ admits infinitely many periodic orbits.
\end{cor}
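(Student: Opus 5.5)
We want to prove that $\hat\rho_p\geq T_p^2/\mathrm{vol}(\mathfrak M,\lambda)$. The plan is to compute the right-hand side explicitly, bound $\hat\rho_p$ from below through the Hill stability equation \eqref{equ: Hill stability equation}, and then compare the two. The existence of infinitely many periodic orbits then follows from the CHHL formula. Recall that, for a nondegenerate-enough Reeb flow on a dynamically convex or tight $S^3$, CHHL states: if some unknotted periodic orbit $\zeta$ satisfies $\hat\rho(\zeta)\neq T(\zeta)^2/\mathrm{vol}$, then there are infinitely many periodic orbits. Strict inequality in our estimate therefore gives the second claim directly. In the equality case $A_0=0$, $B_i=1$, the system is the reduced spatial Kepler problem, which is completely periodic, so infinitely many orbits exist trivially.

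\textbf{Step 1: the right-hand side.} For a mechanical Hamiltonian with $\lambda$ the Liouville form, $\mathrm{vol}(\mathfrak M,\lambda)=\int_{\{H\le h\}}\omega^2/1$ up to the normalization used in CHHL. Integrating out the momenta gives $2\pi\int_{\{V\le h\}}(h-V)\,dr\,dz$. Substituting $z=r w$ and using homogeneity, $V=\varpi^2/(2r^2)-g(w)/r$ with $g(w)=A_0+\sum_i A_i(1+B_iw^2)^{-1/2}$. The $r$-integral is then explicit and yields
$$\mathrm{vol}=c\,\frac{1}{(-h)^{1/2}}\int_{\mathbb R}\frac{g(w)^2}{\varpi}\,dw\cdot(\text{explicit factor}).$$
The same scaling gives the period $T_p=2\pi C/(-2h)^{3/2}\cdot$const, because $T_p$ is the area enclosed by the planar Kepler orbit in the $(r,p_r)$-plane. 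The ratio $T_p^2/\mathrm{vol}$ is therefore a function of $\fe$, $C$, and $\int g^2\,dw$. For the isotropic Kepler case, $g\equiv C(1+w^2)^{-1/2}$, and the ratio equals the known Kepler rotation number; I would verify this against \cite{HQY25}.

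\textbf{Step 2: the rotation number.} Linearizing \eqref{equ: reduced Ham sys} in the $z$-direction along $\zeta_p$ and reparametrizing by $\theta_p$ gives \eqref{equ: Hill stability equation}. Here $\beta$ is determined by $\partial_z^2V|_{z=0}=\sum A_iB_i/r^3$ relative to the Kepler term $C/r^3$, so $\beta+1=\sum A_iB_i/C\cdot$(appropriate normalization). The hypothesis $\sum A_iB_i\ge\sum A_i$ means $\beta\ge0$. The Seifert rotation number $\hat\rho_p$ is the rotation of \eqref{equ: Hill stability equation} over $\theta\in[0,2\pi]$, minus the framing correction. I would use Sturm comparison, which makes $\hat\rho_p$ monotone in $\beta$. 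Using the degenerate-curve analysis of Section \ref{sec: the degenerate curves }, this gives $\hat\rho_p\ge\hat\rho_p|_{\beta=\beta_0}$, with strict inequality when $\beta$ exceeds the isotropic value.

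\textbf{Step 3: comparison, the main obstacle.} The volume side depends on the $B_i$ through $\int g^2$. The term $\int(1+B_iw^2)^{-1/2}(1+B_jw^2)^{-1/2}dw$ decreases in $B_i,B_j$, so $\mathrm{vol}$ decreases as the $B_i$ grow, which moves the two sides the wrong way. The task is to show that the rotation number grows faster than $T_p^2/\mathrm{vol}$. I would first try to bound $\int g^2$ from below in terms of $\sum A_iB_i$ by a Cauchy–Schwarz or convexity argument, reducing to a one-parameter comparison between the Hill rotation number at parameter $\beta$ and an explicit elementary function of $\beta$ and $\fe$. I expect this one-parameter inequality, uniform in $\fe\in(0,1)$, to be the hardest step. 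I would attack it with an averaging or lower bound on the rotation of Ince's equation, for instance via the Prüfer angle estimate $\dot\phi\ge\min(1,1+\beta/(1+\fe))$ refined by Jensen. Equality is forced only when $\beta$ takes the isotropic value and $g$ is the Kepler profile, that is, $A_0=0$ and all $B_i=1$.
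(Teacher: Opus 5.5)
Your bookkeeping is fine: here $C=A_1$ and $D=A_1B_1$, so $\beta=B_1-1\ge 0$. A strict inequality plus CHHL (Corollary~\ref{cor: infini peri orbit1}) gives infinitely many orbits, and $B_1=1$ is the spatial Kepler problem. But the one inequality that carries all the content is never proved.

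You also never pin down the right-hand side, and the Step~1 formulas are off. After $z=rw$, the $r$-integral of $(h-V)\,r$ is $\frac{gs}{2|h|}-\frac{\varpi^2}{2}\ln\frac{g+s}{g-s}$ with $s=\sqrt{g^2+2h\varpi^2}$. It is supported only where $g(w)^2>-2h\varpi^2$, and it is not a multiple of $g(w)^2$. Also, $T_p=\int p_r\,dr=2\pi(C/\sqrt{-2h}-\varpi)$ is the action, not the Kepler time period. For this corollary these slips are harmless. The substitution $z\mapsto\sqrt{B_1}\,z$ shows that $\mathrm{vol}$ scales by $B_1^{-1/2}$ while $T_p$ is unchanged, so $T_p^2/\mathrm{vol}(\mathfrak M,\lambda)=\sqrt{B_1}=\sqrt{1+\beta}$ (Theorem~\ref{thm: vol of M}).

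The observation you are missing is that $\sqrt{1+\beta}$ is precisely the rotation number of \eqref{equ: Hill stability equation} at $\fe=0$. So what must be shown is a comparison in $\fe$ at \emph{fixed} $\beta$: $\hat\rho_{p,\beta,\fe}>\hat\rho_{p,\beta,0}=\sqrt{1+\beta}$ for all $\beta>0$ and $\fe\in(0,1)$. Your Step~2 (Sturm monotonicity in $\beta$) only gives $\hat\rho_p\ge\hat\rho_p|_{\beta=0}=1$. That is useless, since the right-hand side grows with $\beta$.

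Step~3 does not close this gap.
\begin{itemize}
\item The Pr\"{u}fer bound $\dot\phi\ge\min(1,1+\beta/(1+\fe))=1$ again yields only $\hat\rho_p\ge 1$. Even with a constant rescaling of the Pr\"{u}fer angle, the pointwise bound gives only $\sqrt{1+\beta/(1+\fe)}<\sqrt{1+\beta}$.
\item A Jensen refinement applied to the coefficient $k(\theta)=1+\beta/(1+\fe\cos\theta)$ points the wrong way. For large $\beta$ the rotation number is close to the mean of $\sqrt{k}$, which lies strictly below $\sqrt{\bar k}$ by a margin of order $\sqrt\beta$. So no bound of the form $\hat\rho_p\ge\sqrt{\bar k}$ can hold.
\end{itemize}

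The quantity to average is $(1+\fe\cos\theta)^{-1/2}$. The paper does this with the blow-up time change $d\theta=(1+\fe\cos\theta)^{1/2}d\tau$. In $\tau$, the entry $a_{21}$ satisfies $d^2a_{21}/d\tau^2=f\,a_{21}$ with $f\le-(\beta+c(\fe))$ for an explicit $c(\fe)$. The period becomes $\mathcal{T}_{\fe}=\int_0^{2\pi}(1+\fe\cos\theta)^{-1/2}d\theta>4\pi/\sqrt{4-\fe^2}$. Sturm comparison then gives $\hat i\ge\frac{\mathcal{T}_{\fe}}{\pi}\sqrt{\beta+c(\fe)}>2\sqrt{1+\beta}$, but only for $\beta>\beta_*(\fe)$ (Theorem~\ref{thm: comparison mean index 4}). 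The range $0<\beta\le 6/\fe+3$, where both sides tend to $1$ as $\beta\to0^+$, needs the separate degenerate-curve comparison of \cite{HLOQS26} (Theorem~\ref{thm: comparison mean index 3}).

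Without a substitute for these two results, you have neither the inequality nor its strictness for $B_1>1$. Hence you have neither the ``only if'' part of the equality statement nor the CHHL conclusion.
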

\begin{rem}
	In \cite{HOQ26}, the authors considered the case $A_1=1$. For $A_1\neq 1$, we can apply a conformal symplectic transformation
	$$\Phi:(p_r,p_z,r,z)\mapsto(p_r,p_z,r/A_1,z/A_1),$$
	which transforms the system into the case with $A_1=1$ and preserves identical dynamics. Combined with Theorem 1.1 in \cite{HOQ26}, we conclude that infinitely many periodic orbits exist on any fixed compact and regular energy surface for all $B_1>0$.
\end{rem}
Theorem \ref{thm: estimate of rotation number} can be applied to the hip-hop $(1+2n)$-body problem, the Hamiltonian \eqref{equ: hiphop ham} of this problem is a special case of \eqref{equ: reduced Ham}, just take $\mathfrak{n}=n+1, A_{0}=a(n), A_{n+1}=m_{0}, B_{n+1}=1$ and $A_{i}=b_{n,i}/4, B_{i}=b_{n,i}^2$ for $i=1,...,n$. The planar Kepler orbit of \eqref{equ: reduced Ham} corresponds to the elliptic relative equilibrium of the regular $(1+2n)$-gon central configuration, which we refer to as the planar ERE. Therefore, we obtain

\begin{cor}\label{coro: estimate of rotation number 1}
	Let $b(n)=\frac{1}{4}\sum_{k=1}^{n}b_{n,k}$, $c(n)=a(n)+b(n)$. Assuming $m_0\geq0,n\geq2$ and $-(c(n)+m_0)^{2}<2h\varpi^2<-a(n)^2$, this makes the energy surface $\mathfrak{M}_1=H_1^{-1}(h)$ compact. Let $\zeta_p\subset\mathfrak{M}_1\cap\{p_z=z=0\}$ be the planar ERE on $\mathfrak{M}_1$ with minimal Reeb period $T_p>0$ and Seifert rotation number $\hat{\rho}_p$. We have $\hat{\rho}_p> T_p^2/\mathrm{vol}(\mathfrak M_1,\lambda_1)$ and there exist infinitely many periodic orbits on compact energy surface $\mathfrak{M}_1$.
\end{cor}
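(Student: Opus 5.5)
The corollary will follow from Theorem \ref{thm: estimate of rotation number} once we show that the Hamiltonian \eqref{equ: hiphop ham} fits its hypotheses with the choice of parameters stated before the corollary: $\mathfrak{n}=n+1$, $A_0=a(n)$, $A_{n+1}=m_0$, $B_{n+1}=1$, and $A_i=b_{n,i}/4$, $B_i=b_{n,i}^2$ for $i=1,\dots,n$.

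First we check the positivity assumptions and the energy range. Since $n\ge 2$, we have $a(n)>0$, and $b_{n,k}=\csc\frac{(2k-1)\pi}{2n}>0$ because the angles lie in $(0,\pi)$. The case $m_0=0$ needs care: then the term $A_{n+1}$ is absent, which is harmless since that summand simply drops out, or equivalently we keep $\mathfrak{n}=n$. In either case
\[
C=\sum_{i=0}^{\mathfrak{n}} A_i=a(n)+b(n)+m_0=c(n)+m_0 .
\]
Hence the assumed range $-(c(n)+m_0)^2<2h\varpi^2<-a(n)^2$ is exactly $-C^2<2h\varpi^2<-A_0^2$. By Proposition \ref{topo of energy surfaces 1}, $\mathfrak{M}_1$ is then a compact regular three-sphere.

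The key step is the technical condition $\sum_{i\ge1}A_iB_i\ge\sum_{i\ge0}A_i$. It reads
\[
\frac14\sum_{k=1}^n b_{n,k}^3+m_0\;\ge\; a(n)+\frac14\sum_{k=1}^n b_{n,k}+m_0 ,
\]
that is,
\[
\sum_{k=1}^n \bigl(b_{n,k}^3-b_{n,k}\bigr)\;\ge\;\sum_{k=1}^{n-1}\csc\frac{k\pi}{n}.
\]
Every $b_{n,k}\ge1$, so each summand on the left is nonnegative. To compare the two sides, I would use the largest terms $k=1$ and $k=n$, where $b_{n,1}=b_{n,n}=\csc\frac{\pi}{2n}$. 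Set $s=\sin\frac{\pi}{2n}$. Their contribution is
\[
2\,(s^{-3}-s^{-1}) .
\]
The right side satisfies
\[
\sum_{k=1}^{n-1}\csc\frac{k\pi}{n}\le \frac{2}{\sin(\pi/n)}+(n-3)\,\csc\frac{2\pi}{n}\quad\text{or the cruder bound }\ \le (n-1)\csc\frac{\pi}{n} .
\]
Using $\sin x\ge 2x/\pi$ on $[0,\pi/2]$ and $\sin x\le x$, the left side grows like $n^3$ while the right side is $O(n\log n)$, certainly $O(n^2)$. So the inequality holds for all large $n$ with an explicit threshold, and the small values of $n$ (e.g.\ $n=2,3,4,5$) can be checked numerically. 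For example, for $n=2$ the left side is $2(2\sqrt2-\sqrt2)=2\sqrt2\approx2.83$ and the right side is $1$. This elementary trigonometric estimate is the main obstacle; the difficulty is only in making the constants explicit so that the finite check is small.

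Finally we get strict inequality. Theorem \ref{thm: estimate of rotation number} gives $\hat\rho_p\ge T_p^2/\mathrm{vol}(\mathfrak{M}_1,\lambda_1)$, with equality only if $A_0=0$ and all $B_i=1$. Here $A_0=a(n)>0$, and moreover $B_1=\csc^2\frac{\pi}{2n}>1$, so equality is excluded and the inequality is strict. The existence of infinitely many periodic orbits on $\mathfrak{M}_1$ is the last conclusion of the theorem. These correspond to relative periodic orbits of the $(1+2n)$-body problem.
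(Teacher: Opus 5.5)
Your proposal is correct and is essentially the paper's argument. You match the parameters and verify $D>C$ by keeping only the two terms $k=1,n$ equal to $\csc^3(\pi/2n)$ and bounding the $\csc$-sum crudely. The paper does the same for the ratio $d(n,0)/c(n,0)$, bounding it below by a rational function of $n$ that is increasing and exceeds $1$ at $n=2$. Strictness then comes from $A_0=a(n)>0$ in the equality case of Theorem \ref{thm: estimate of rotation number}.

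Your crude bounds in fact already close for every $n\ge2$ with no numerical case check. With $s=\sin\frac{\pi}{2n}$, the bounds $s^{-3}-s^{-1}\ge(2n/\pi)^3-2n/\pi$ and $\csc(\pi/n)\le n/2$ give $2\bigl((2n/\pi)^3-2n/\pi\bigr)>n(n-1)/2$ for all $n\ge2$. Your separate treatment of $m_0=0$, where $A_{n+1}=0$ is not formally allowed in the theorem, tidies a point the paper passes over.
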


\begin{rem}
	In \cite{Terracini2007}, Terracini and Venturelli proved the existence of a family of hip-hop symmetric solutions for the weak force case. The rotation angle $\alpha$ of these orbits about the $z$-axis over one period takes values within a given interval. However, it is unknown whether these orbits lie on a common energy surface. By Corollary \ref{coro: estimate of rotation number 1}, there exist infinitely many hip-hop symmetric orbits on every compact regular energy surface for the Newtonian potential, yet we cannot determine the total rotation angle about the $z$-axis over one period for any given orbit.
\end{rem}

Another closely related problem is the $n$-pyramidal problem, that is, the motion of $1+n$ point masses in $\rr^3$ under Newton's universal gravitational law. In this system, the latter $n$ identical masses forming a regular $n$-gon are symmetric about a fixed axis, along which the first particle moves.

When $n=2$, we obtain the well-known spatial isosceles three-body problem. This problem has attracted extensive research, see \cite{Ale72,M84,Sit60}. Recently, a symplectic dynamical approach for the spatial isosceles three-body problem was developed in \cite{HLOYS23}, where the authors found a Hopf link which spans an open book decomposition whose pages are annulus-like global surfaces of section. They also proved the existence of infinitely many periodic orbits on compact regular energy surfaces when the mass ratio is large enough and verified the convexity of energy surfaces. In \cite{HLOQS26}, the authors proved the existence of infinitely many periodic orbits on every compact regular energy surface and found a non-trivial twist interval defined by the rotation number of the Euler orbit and the contact volume of energy surface. It encodes the relative winding of periodic orbits. For non-compact energy surfaces, the authors proved the existence of infinitely many periodic orbits and infinitely many parabolic trajectories. In \cite{HQY25}, Hu, Qiao and Yu proved that the $n$-pyramidal problem possesses infinitely many periodic orbits on compact energy surfaces when the mass ratio $\alpha$ small enough. In this paper, we generalize the conclusion to all mass ratios provided that the energy surface remains compact.

Following \cite{HQY25}, the reduced Hamiltonian of the $n$-pyramidal problem can be written as
\begin{equation}\label{equ: reduced Ham 2}
	H_2(p_r,p_z,r,z)=\frac{1}{2}(p_r^2+p_z^2)+\frac{\varpi^2}{2r^2}-\frac{a(n)\alpha}{r}-\frac{1}{\sqrt{r^2+(1+n\alpha)z^2}},
\end{equation}
where $a(n)=\frac{1}{4}\sum\limits_{k=1}\limits^{n-1}\csc(\frac{k\pi}{n})$, $\alpha>0$ is the mass ratio between one of the last $n$ bodies and the first body, and $\varpi$ is the angular momentum with respect to the $z$-axis. One can see that this system is also a special case of \eqref{equ: reduced Ham}, just take $\mathfrak{n}=1, A_{0}=a(n)\alpha, A_{1}=1$ and $B_{1}=(1+n\alpha).$ The planar Kepler orbit of \eqref{equ: reduced Ham} corresponds to the elliptic relative equilibrium of the regular $(1+n)$-gon central configuration, which we refer to as the planar ERE.
\begin{cor}\label{coro: estimate of rotation number 2}
	Assuming $\alpha>0$ and $-(1+a(n)\alpha)^2<2h\varpi^2<-(a(n)\alpha)^2$, this makes the energy surface $\mathfrak{M}_2=H_2^{-1}(h)$ compact. Let $\zeta_p\subset\mathfrak{M}_2\cap\{p_z=z=0\}$ be the planar ERE on $\mathfrak{M}_2$ with minimal Reeb period $T_p>0$ and Seifert rotation number $\hat{\rho}_p$. When $2\leq n\leq472$, we have $\hat{\rho}_p> T_p^2/\mathrm{vol}(\mathfrak M_2,\lambda_2)$ and there exist infinitely many periodic orbits on compact energy surface $\mathfrak{M}_2$.
\end{cor}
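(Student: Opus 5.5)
We want Corollary \ref{coro: estimate of rotation number 2}. The plan is to check the hypotheses of Theorem \ref{thm: estimate of rotation number} for the parameters of \eqref{equ: reduced Ham 2}, namely $\mathfrak n=1$, $A_0=a(n)\alpha$, $A_1=1$, $B_1=1+n\alpha$, so that $C=1+a(n)\alpha$.

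First, the energy range in the corollary is exactly $-C^2<2h\varpi^2<-A_0^2$. So $\mathfrak M_2$ is a compact regular three-sphere (Proposition \ref{topo of energy surfaces 1}), and the planar ERE is the planar Kepler orbit $\zeta_p$.

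Second, the technical condition $\sum_{i\ge1}A_iB_i\ge\sum_{i\ge0}A_i$ reads
$$1+n\alpha\ \ge\ 1+a(n)\alpha ,\qquad\text{i.e.}\qquad n\ge a(n)=\tfrac14\sum_{k=1}^{n-1}\csc\Big(\tfrac{k\pi}{n}\Big).$$
This is independent of $\alpha>0$. The main work is therefore to show $a(n)\le n$, and in fact $a(n)<n$, exactly for $2\le n\le 472$.

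For the asymptotics we use the standard estimate
$$\sum_{k=1}^{n-1}\csc(k\pi/n)=\tfrac{2n}{\pi}\big(\ln(2n/\pi)+\gamma\big)+O(1/n).$$
Hence $a(n)\approx \tfrac{n}{2\pi}\big(\ln(2n/\pi)+\gamma\big)$. The ratio $a(n)/n$ is increasing and crosses $1$ where $\ln(2n/\pi)+\gamma\approx 2\pi$, i.e. $n\approx \tfrac{\pi}{2}e^{2\pi-\gamma}\approx 472$–$473$. This is consistent with the stated bound.

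To make this rigorous I would proceed in two ways:
\begin{itemize}
\item Bound the sum by a convexity/integral comparison. Since $\csc$ is convex on $(0,\pi)$, we get $\frac{\pi}{n}\sum_{k=1}^{n-1}\csc(k\pi/n)\le 2\csc(\pi/n)\frac{\pi}{n}+\int_{\pi/n}^{\pi-\pi/n}\csc x\,dx$ (or a midpoint-type bound), with $\int\csc=\ln\tan(x/2)$. This yields an explicit upper bound $a(n)<n$ for small and moderate $n$.
\item Near the threshold, where the explicit bounds are too crude, verify $a(n)<n$ by direct numerical evaluation for the finitely many remaining $n\le472$.
\end{itemize}

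Strictness of $a(n)<n$ gives $A_0>0$, so the equality case of Theorem \ref{thm: estimate of rotation number} is excluded. (Equality would also require $B_1=1$, impossible since $\alpha>0$.) We therefore obtain the strict inequality $\hat\rho_p>T_p^2/\mathrm{vol}(\mathfrak M_2,\lambda_2)$.

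The existence of infinitely many periodic orbits then follows from the "moreover" part of Theorem \ref{thm: estimate of rotation number}, i.e.\ from the CHHL formula. The main obstacle is the sharp cosecant-sum inequality near $n=472$; I would handle it by a computer-assisted check with rigorous error control rather than asymptotics alone.
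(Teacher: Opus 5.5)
Your proposal is correct and follows the paper's proof: specialize to $\mathfrak n=1$, $A_0=a(n)\alpha$, $A_1=1$, $B_1=1+n\alpha$, observe that the hypothesis $\sum_{i\ge1}A_iB_i\ge\sum_{i\ge0}A_i$ of Theorem \ref{thm: estimate of rotation number} becomes $n\ge a(n)$ independently of $\alpha$, and apply that theorem. The paper simply asserts $a(n)<n$ for $2\le n\le 472$, so your asymptotic argument plus bounded numerical check supplies detail it omits. One small slip: $A_0=a(n)\alpha>0$ follows from $\alpha>0$, not from the strictness of $a(n)<n$, but the equality case is excluded anyway since $B_1=1+n\alpha\neq 1$, as you note.
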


The paper is organized as follows. In Section \ref{sec: preliminaries}, we first reduce the hip-hop $(1+2n)$-body problem to system \eqref{equ: hiphop ham} which serves as a special case of system \eqref{equ: reduced Ham}. We then classify the topological structure of energy surfaces and derive a valid criterion for the existence of infinitely many periodic orbits via the CHHL formula for system \eqref{equ: reduced Ham}. In Section \ref{sec: the volume estimate}, we establish the relation between the contact volume of the energy surface and the minimal Reeb period of the planar Kepler orbit. In Section \ref{sec:  estimates of the rotation number}, we evaluate the Seifert rotation number using Maslov-type index theory and complete the proofs of Theorem \ref{thm: estimate of rotation number}, Corollary \ref{coro: estimate of rotation number 1} and Corollary \ref{coro: estimate of rotation number 2}. Finally, we briefly review the Maslov-type index theory in Appendix \ref{sec: App}.



\section{Preliminaries}\label{sec: preliminaries}

\subsection{The hip-hop $(1+2n)$-body problem}\label{sec: $1+2n$}
We consider the motion of $1+2n$ particles in $\mathbb{R}^3$ under Newton's universal gravitational law. There exists an invariant subsystem in which the motion of the last $2n$ particles of equal mass has hip-hop symmetry and the first one is fixed at the center of mass. Let $q_i$ denote the position of the $i$-th body. Assume that $q_1,...,q_{2n}$ have equal mass $m>0$ and hip-hop symmetry, and $q_{0}$ has the mass $mm_0 \geq0$, where $m_0$ is the mass ratio of the central mass to one of the other $2n$ masses. Hip-hop symmetry means that $q_i=Rq_{i-1}$, $i=2,...,2n$, where
\begin{equation}
	R=\begin{pmatrix}
		\cos (\frac{\pi}{n})  & -\sin(\frac{\pi}{n}) & 0 \\[0.7em]
		\sin (\frac{\pi}{n})  & \cos (\frac{\pi}{n}) & 0 \\
		0 & 0 & -1
	\end{pmatrix},
\end{equation}
which represents a rotation combined with a reflection.

Let $q_1=(x,y,z)$ and  $q_{0}=(0,0,0)$, then $\dot {q}_1=(\dot {x},\dot {y},\dot {z})$ and  $\dot {q}_0=(0,0,0)$. The motion of $q_1$  entirely determines the dynamics of the system, and the Lagrangian writes as
\begin{equation}
	\begin{aligned}
		L(q,\dot{q})&=nm(\dot x^{2}+\dot y^{2}+\dot z^{2})+\sum_{k=1}^{2n-1}\frac{nm^{2}}{\sqrt{4(x^{2}+y^{2})\sin^{2} (\frac{k\pi}{2n})+((-1)^k -1)^2 z^{^{2}}}}+\frac{2nm^2m_0}{\sqrt{x^{2}+y^{2}+z^{2}}}.
	\end{aligned}
\end{equation}

Without loss of generality, we assume $m=1$. Let $\hat{L}=L/(2n)$ which is a scaling and denote the same dynamical as $L$,
\begin{equation}
	\begin{aligned}
		\hat{L}(q,\dot{q})=\frac{1}{2}(\dot x^{2}+\dot y^{2}+\dot z^{2})+\frac{1}{2}\sum_{k=1}^{2n-1}\frac{1}{\sqrt{4(x^{2}+y^{2})\sin^{2} (\frac{k\pi}{2n})+((-1)^k -1)^2 z^{^{2}}}}+\frac{m_0}{\sqrt{x^{2}+y^{2}+z^{2}}}.
	\end{aligned}
\end{equation}

Using cylindrical coordinates $(r,\theta,z)\in \mathbb{R}^{+}\times \mathbb{R}/2\pi \mathbb{Z} \times \mathbb{R}$, and by the Legendre transform

$$p_r =\dot r,\ \ \ p_\theta =r^{2}\dot{\theta},\ \ \ p_z =\dot{z},$$	
we can obtain the Hamiltonian
\begin{equation}
	\begin{aligned}
		H(p_r,p_\theta,p_z,r,\theta,z)=\frac{1}{2}(p_r^{2}+\frac{p_\theta ^{2}}{r^{2}}+p_z^{2})-\frac{1}{2}\sum_{k=1}^{2n-1}\frac{1}{\sqrt{4r^{2}\sin^{2} (\frac{k\pi}{2n})+((-1)^k -1)^2 z^{^{2}}}}-\frac{m_0}{\sqrt{r^{2}+z^{2}}}.
	\end{aligned}
\end{equation}

Since $H$ is independent of $\theta$, the angular momentum $p_\theta $ is constant along the trajectories.	We denote $\varpi= p_\theta =r^{2}\dot{\theta}$	and obtain the Hamiltonian \eqref{equ: reduced Ham}

$$H_{1}(p_r,p_z,r,z)=\frac{1}{2}(p_r^{2}+p_z^{2})+V_1(r,z),$$
with the corresponding potential

$$V_1(r,z)=\frac{\varpi^{2}}{2r^{2}}-\frac{1}{2}\sum_{k=1}^{2n-1}\dfrac{1}{\sqrt{4r^{2}\sin^{2} (\frac{k\pi}{2n})+((-1)^k -1)^2 z^{^{2}}}}-\frac{m_0}{\sqrt{r^{2}+z^{2}}}.$$	



To simplify the expression, we define
\begin{equation}
	a(n)=\frac{1}{4}\sum_{k=1}^{n-1}a_{n,k} \text{ with } a_{n,k}=\csc\left(\frac{k\pi}{n}\right) \text{ and } b(n)=\frac{1}{4}\sum_{k=1}^{n}b_{n,k} \text{ with } b_{n,k}=\csc\left(\frac{(2k-1)\pi}{2n}\right),
\end{equation}
and $V_1(r,z)$ will write as

$$V_1(r,z)=\frac{\varpi^{2}}{2r^{2}}-\frac{a(n)}{r}-\frac{1}{4}\sum_{k=1}^{n}\frac{b_{n,k}}{\sqrt{r^2+b_{n,k}^2z^2}}-\frac{m_0}{\sqrt{r^{2}+z^{2}}}.$$

\subsection{Topology of the energy surfaces}\label{sec: topology}

We consider the topology of the energy surfaces for the reduced Gutzwiller-type anisotropic kepler problem.

\begin{prop}\label{topo of energy surfaces 1}
	Assume $A_0\geq0$ and $A_i,B_i>0,\,i=1,\cdots,\mathfrak{n}$, $C=\sum_{i=0}^{\mathfrak{n}}A_i$. Let $\mathfrak{M}=H^{-1}(h)$ be the energy surface of the reduced Hamiltonian \eqref{equ: reduced Ham}. The following statements hold:
	\begin{itemize}
		\item[(i)] If $-C^2<2h\varpi^2<-A_0^2$, then $\mathfrak{M}$ is a compact manifold homeomorphic to $\mathbb{S}^{3}$.
		\item[(ii)] If $2h\varpi^2=-C^2$, then $\mathfrak{M}=\{(0,0,\varpi^2/C,0)\}$.
		\item[(iii)] If $2h\varpi^2<-C^2$, then $\mathfrak{M}=\emptyset$.
		\item[(iv)] If $2h\varpi^2\geq-A_0^2$, then $\mathfrak{M}$ is unbounded in $z$ direction.
	\end{itemize}
\end{prop}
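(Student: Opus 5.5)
The plan is to analyse the potential $V(r,z)=\frac{\varpi^2}{2r^2}-\frac{A_0}{r}-\sum_{i=1}^{\mathfrak{n}}\frac{A_i}{\sqrt{r^2+B_iz^2}}$ on $\mathbb{R}^+\times\mathbb{R}$. The key fact is that $\mathfrak{M}=\{\frac12(p_r^2+p_z^2)=h-V\}$ is a sphere bundle over the Hill region $\mathcal{H}=\{V\le h\}$, with fibres collapsing on $\partial\mathcal{H}$. So $\mathfrak{M}$ is homeomorphic to $\mathbb{S}^3$ exactly when $\mathcal{H}$ is a compact topological disk (a closed $2$-cell) and $h$ is a regular value, i.e. $\nabla V\neq0$ on $\partial\mathcal{H}$. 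I would assume $\varpi\neq0$ throughout; note $h<0$ is forced in (i)--(iii).

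First I would study $V$ along each vertical line $r=\text{const}$. Since each term $-A_i(r^2+B_iz^2)^{-1/2}$ is even in $z$ and strictly increasing in $|z|$, the function $z\mapsto V(r,z)$ is even, strictly increasing in $|z|$, and minimal at $z=0$, with limit $V_\infty(r):=\frac{\varpi^2}{2r^2}-\frac{A_0}{r}$ as $|z|\to\infty$. Consequently, for each $r$ the slice $\mathcal{H}\cap\{r\}$ is either empty or a symmetric interval $[-\zeta(r),\zeta(r)]$, possibly unbounded.

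Next come the minimum values. On $z=0$ we have $V(r,0)=\frac{\varpi^2}{2r^2}-\frac{C}{r}$, whose unique critical point, a minimum, is at $r=\varpi^2/C$ with value $-C^2/(2\varpi^2)$. Hence $\min V=-C^2/(2\varpi^2)$, attained only at $(\varpi^2/C,0)$, which is also the only critical point of $V$ (since $\partial_zV=0$ forces $z=0$). This immediately gives (iii), and (ii), because $\mathfrak{M}$ then reduces to that point with $p=0$. Similarly $\inf V_\infty=-A_0^2/(2\varpi^2)$. For (iv), when $h\ge -A_0^2/(2\varpi^2)$: if $A_0>0$, take $r=\varpi^2/A_0$, where $V(r,z)<V_\infty(r)\le h$ for all $z$, so the whole vertical line lies in $\mathcal{H}$. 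If $A_0=0$, then $h\ge0$ and $V(r,z)<\varpi^2/(2r^2)\le h$ fails only when $h=0$; in that case I would use large $r$ with $|z|$ large, where $V(r,z)\approx \frac{\varpi^2}{2r^2}-\sum_i\frac{A_i}{\sqrt{B_i}|z|}<0$ for suitable choices, e.g. $|z|=r$ with $r\to\infty$. Either way $\mathcal{H}$, and hence $\mathfrak{M}$, is unbounded in $z$.

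For (i), where $-C^2<2h\varpi^2<-A_0^2$, we have $h<V_\infty(r)$ for every $r$, so each slice is a bounded interval. Moreover $\mathcal{H}\cap\{z=0\}=\{r:\frac{\varpi^2}{2r^2}-\frac{C}{r}\le h\}=[r_-,r_+]$ with $0<r_-<r_+$, and $V(r,0)>h$ outside this interval, so the slices are nonempty exactly for $r\in[r_-,r_+]$. To bound $\mathcal{H}$ uniformly I would use continuity of $\zeta$ on a compact $r$-interval, obtained from the implicit function theorem since $\partial_zV>0$ for $z>0$. This shows $\mathcal{H}=\{r_-\le r\le r_+,\ |z|\le\zeta(r)\}$ with $\zeta$ continuous, $\zeta(r_\pm)=0$, and $\zeta>0$ inside, so $\mathcal{H}$ is a closed disk. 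Regularity holds because the only critical point of $V$ lies at level $-C^2/(2\varpi^2)<h$. Finally I would glue the fibres: writing $\mathfrak{M}$ as $\{(q,\sqrt{2(h-V(q))}\,u): q\in\mathcal{H},\ u\in\mathbb{S}^1\}$ gives a quotient of $D^2\times\mathbb{S}^1$ collapsing the circles over $\partial D^2$, which is $\mathbb{S}^3$. The main obstacle is just a clean justification that $\mathcal{H}$ is a disk, in particular continuity of $\zeta$ at the endpoints $r_\pm$. I would handle this via monotonicity in $|z|$ together with the fact that $V(r,0)\to h$ at $r_\pm$.
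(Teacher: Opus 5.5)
Your proof is correct. For (ii)--(iv) it runs essentially parallel to the paper's argument. Both locate the unique critical point $(0,0,\varpi^2/C,0)$ as the global minimum at level $-C^2/(2\varpi^2)$, and both use monotonicity of $V$ in $z^2$ for (iv). Two small differences are in your favour. You prove global minimality directly from $V(r,z)\ge V(r,0)\ge -C^2/(2\varpi^2)$, whereas the paper infers it from uniqueness and a Hessian computation. You also treat the borderline case $A_0=0$, $h=0$ explicitly via points with $|z|=r\to\infty$, which the paper's one-line $\liminf$ remark passes over.

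The genuine difference is in (i). The paper computes the Hessian to see that the minimum is non-degenerate and then argues by Morse theory. Just above the minimum the level set is a small sphere. Since there are no further critical values and the sublevel sets stay bounded (and away from $r=0$) for $2h\varpi^2<-A_0^2$, the level sets remain spheres across the whole interval.

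You instead exploit the mechanical form of $H$. You view $\mathfrak{M}$ as the circle bundle over the Hill region, collapsed over its boundary. Strict monotonicity of $V$ in $|z|$, together with $h<\inf V_\infty$, shows that the Hill region is the region between the graphs of $\pm\zeta$ over $[r_-,r_+]$, hence a closed disk, so $\mathfrak{M}\cong (D^2\times\mathbb{S}^1)/{\sim}\,\cong\mathbb{S}^3$. Your sequence argument at the endpoints is sound: if $\zeta(r_k)\ge\delta>0$ with $r_k\to r_\pm$, then $V(r_\pm,\delta)\le h=V(r_\pm,0)$, contradicting strict monotonicity in $|z|$.

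What each route buys: yours needs no non-degeneracy and no gradient-flow argument, and it gives an explicit picture of $\mathfrak{M}$ and its projection. The paper's is shorter, yields the diffeomorphism type directly, and would apply to any Hamiltonian with a single non-degenerate minimum and compact sublevel sets, without relying on the special monotone-in-$|z|$ structure of the potential.
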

\begin{proof}
	The proof is similar to \cite[Proposition 2.1]{HQY25} and \cite[Proposition 2.1]{HOQ26}. First, by solving $$\nabla H=\left(p_{r},p_{z},-\frac{\varpi^{2}}{r^{3}}+\frac{A_0}{r^2}+\sum_{i=1}^{\mathfrak{n}}\frac{A_ir}{(r^2+B_iz^2)^{3/2}},\sum_{i=1}^{\mathfrak{n}}\frac{A_iB_iz}{(r^2+B_iz^2)^{3/2}}\right)^T=0,$$
	we know $(0,0,\varpi^2/C,0)$ is the unique critical point of $H$. Meanwhile, the Hessian matrix at $(0,0,\varpi^2/C,0)$ is
	$$\nabla^{2}H(0,0,\varpi^2/C,0)=\text{diag}(1,1,C^4/\varpi^6, C^3D/\varpi^6),$$
	where $D=\sum_{i=1}^{\mathfrak{n}}A_iB_i$, which is positive definite. Hence, the unique critical
	point is a non-degenerate global minimum and $H(0,0,\varpi^2/C,0)=-C^2/(2\varpi^2)$. Then $(ii), (iii)$ are proved. Since $H$ is increasing with respect to $z^2$, when $h\geq\liminf_{z\rightarrow\infty}H=-A_0^2/(2\varpi^2)$,  $\mathfrak{M}$ is unbounded in $z$ direction. This proves $(iv)$.
	For $(i)$, when $-C^2<2h\varpi^2<-A_0^2$, by above, $\mathfrak{M}$ is bounded in $z$ direction. Since $H$ increases with respect to $z^2$, we consider the case $z=0$, which attains the maximum range of $r$, by direct computation, $r\in[\frac{C(1-\fe)}{-2h},\frac{C(1+\fe)}{-2h}]$ is bounded. So it is compact. By Morse theory, as $h$ increases from
	below $-C^2/(2\varpi^2)$ to slightly above $-C^2/(2\varpi^2)$, $\mathfrak{M}$
	changes from the empty set to a sphere-like regular hypersurface. Since $\mathfrak{M}$ is
	bounded for every $h$ in that interval, it is a sphere-like hypersurface.
	

\end{proof}

\subsection{Reeb flows and Rotation numbers}\label{sec: Reeb flow and Rotation number}
Let $(M,\xi=\ker\lambda)$ be a contact three-sphere with contact form $\lambda$, which is a $1$-form so that $\lambda\wedge d\lambda$ becomes a volume form on $M$. The Reeb vector field $R$ is determined by $\lambda(R)=1,d\lambda(R,\cdot)\equiv0$ and preserves the contact structure $\xi:=\ker \lambda$. $\xi$ is co-oriented by $\lambda$, on which there is a symplectic form $d\lambda$. The flow $\{\phi_t\}_{t\in \mathbb R}$ of $R$ is called the Reeb flow of $(M,\xi=\ker\lambda)$. A periodic orbit (or Reeb orbit) of $R$ is a map $\zeta:[0,T]\rightarrow M$, which is a periodic trajectory of $\phi_t$ with period (or Reeb period) $T>0$. We call $\zeta$ simple if $T$ is the minimal period of $\zeta$. Denote $\mathcal P$ the collection of Reeb orbits. Each simple element $\zeta\in \mathcal P$ bounds an embedded surface $\mathcal S$ in $M$, called Seifert surface, whose $d\lambda$-area is equal to $T$. We call $\zeta$ unknotted if $\mathcal S$ is a disk. Identifying the Reeb orbits, which differ by a time shift. Let $\tau$ be a trivialization of $\xi$ over $\zeta$ in the distinguished homotopy class determined by $\mathcal S$, for which the outward normal vector field to $\mathcal S$ has zero winding number along $\zeta$. Under the trivialization $\tau$, the transverse flow $d\phi_t:\xi|_{\zeta(0)}\rightarrow \xi|_{\zeta(t)}$ determines a symplectic path $\gamma(t),t\in[0,T]$, in $\mathrm{Sp}(2)$. Define the Seifert rotation number of $\zeta$ as
$$
\hat \rho(\zeta):=\hat i(\zeta)/2,
$$
where $\hat i(\zeta)$ is the mean index of the sympletic path $\gamma$, see Section \ref{sec: App}. Define the linearized return map of $\zeta$ as $d\phi_T: \xi_{\zeta(0)}\rightarrow \xi_{\zeta(T)}$. A Reeb orbit $\zeta$ is a called elliptic if the two eigenvalues of $d\phi_T$ lies in the unit circle $\mathbf U\subset \mathbb C$. Otherwise, we call $\zeta$ a hyperbolic Reeb orbit. A Reeb orbit $\zeta$ is called irrational elliptic if $\hat \rho(\zeta)$ is an irrational number.

If $(M,\xi=\ker\lambda)$ is a contact-type three-sphere in $(\mathbb R^4,\omega_0)$ with the standard symplectic form $\omega_0=dy_1\wedge dx_1+dy_2\wedge dx_2$ satisfying $d\lambda=\omega_0$, we can choose a $d\lambda$-positive frame $\{X_1,X_2\}\subset TM$,
$$\begin{aligned}
X_1&=n_4\partial_{y_1}-n_3\partial_{y_2}+n_2\partial_{x_1}-n_1\partial_{x_2},\\
X_2&=-n_2\partial_{y_1}+n_1\partial_{y_2}+n_4\partial_{x_1}-n_3\partial_{x_2}.
\end{aligned}$$
where $n:=n_1\partial_{y_1}+n_2\partial_{y_2}+n_3\partial_{x_1}+n_4\partial_{x_2}$ is the outward normal vector field on $M$. In particular, $X_1,X_2$ are transverse to the Reeb vector field $R$. Then for every $\zeta\in \mathcal P$, we obtain a trivialization $\tau_0$ of $\xi$ over $\zeta$ using $\{X_1,X_2\}$ and the projection $TM\rightarrow \xi$ along $R$ and it determines a symplectic path $\gamma_0(t),t\in[0,T]$, in $\Sp(2)$. Define another rotation number $\rho(\zeta):=\hat i(\gamma_0)/2$ as used in \cite{HLOYS23}, which coincide with usual rotation number by linearied equation along $\zeta$. For an unknotted Reeb orbit $\zeta\in \mathcal P$, since the normal vector field to $\mathcal S$ along $\zeta$ always admits winding number $-1$ in $\tau_0$, we have the following relation
$$
\rho(\zeta)=\hat \rho(\zeta)+1.
$$

For a general tight three-sphere $(M,\xi=\ker\lambda)$, Cristofaro-Gardiner, Hryniewicz, Hutchings and Liu \cite{CHHL23,CHHL26} proved the two-or-infinity conjecture and obtained an interesting formula (the CHHL formula) for the relation among contact volume, minimal Reeb period, and Seifert rotation number in the case of exactly two simple Reeb orbits. 
\begin{thm}[\cite{CHHL23} Theorems 1.2 and 1,5; \cite{CHHL26} Theorem 1.1]\label{thm: CHHL23}
Let $(M,\xi=\ker\lambda)$ be a tight three-sphere, then $\lambda$ has either two or infinitely many simple Reeb orbits. When $\lambda$ has exactly two simple Reeb orbits $\zeta_{1},\zeta_{2}$ with minimal Reeb period $T_1,T_2$. Let $\hat\rho_{i}=\hat\rho(\zeta_i)$ denotes the Seifert rotation number of $\zeta_{i},\ i=1,2$, and let $\mathrm{vol}(M,\lambda)$ be the contact volume of $M$, then
\begin{equation*}\label{eq:Kepler equal}
\mathrm{vol}(M,\lambda)=T_{1}^{2}/\hat\rho_{1}=T_{2}^{2}/\hat\rho_{2}.
\end{equation*}
Moreover, both $\zeta_{1}$ and $\zeta_{2}$ are irrationally elliptic.
\end{thm}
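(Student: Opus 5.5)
This is an imported result: in the paper it is quoted from \cite{CHHL23,CHHL26} and not proved. The following is only my reconstruction of how I would approach it, using embedded contact homology (ECH).

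\textbf{Setup.} Since $\xi$ is tight on $S^3$, it is the standard structure, and I would use the known computation of ECH for $(S^3,\xi_{\rm std})$. It is $\mathbb Z/2$ in every even grading $2k\geq 0$, zero otherwise, and the $U$-map is an isomorphism $ECH_{2k+2}\to ECH_{2k}$. This gives a sequence of spectral invariants
\[
c_0=0<c_1\le c_2\le\cdots .
\]
The key analytic input is the ECH volume property,
\[
\lim_{k\to\infty} c_k(\lambda)^2/k \;=\; 2\,\mathrm{vol}(M,\lambda).
\]
Each $c_k$ is realized as the total action of an ECH orbit set, that is, a finite union of Reeb orbits with multiplicities, of ECH index $2k$.

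\textbf{Step 1: finitely many orbits forces nondegenerate elliptic behaviour.} Assume $\lambda$ has only finitely many simple Reeb orbits. I would first show each one is irrationally elliptic. For a hyperbolic orbit, multiplicity at most one is allowed in ECH generators. For a rational elliptic orbit, I would perturb to a nondegenerate form and track the resulting orbit sets. In both cases I would aim to show that the actions available to realize $c_k$ grow too slowly or too quickly in index to be compatible with the volume property, or else that some $U$-map step would force a $J$-holomorphic current producing a new simple orbit. This part leans on the strong arguments of \cite{CHHL23}, and I expect it to be the main obstacle. What I would try first is the "$U$-cycle" argument: there is a chain of $U$-curves through every point, and these curves produce a global surface of section or a disk-like surface. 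That reduces the problem to a disk or annulus map, where Franks' theorem gives $2$ or $\infty$ simple orbits.

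\textbf{Step 2: two elliptic orbits and the combinatorics of ECH.} Suppose there are exactly two simple orbits $\zeta_1,\zeta_2$, both irrationally elliptic with Seifert rotation numbers $\hat\rho_1,\hat\rho_2$. In this case every generator has the form $\zeta_1^{m}\zeta_2^{n}$ with $m,n\geq 0$ and action $mT_1+nT_2$. Its ECH index is computed from $\hat\rho_1,\hat\rho_2$ via the Conley--Zehnder terms $\sum_{j\le m}(2\lfloor j\hat\rho_1\rfloor+1)$, and similarly for $\zeta_2$, plus the linking and relative self-intersection terms. The linking number of the two orbits is $1$, since they form a Hopf link. Asymptotically, the index of $\zeta_1^{m}\zeta_2^{n}$ is the quadratic form
\[
\hat\rho_1 m^2+2mn+\hat\rho_2 n^2 .
\]
I would combine this with the Seifert-surface normalization and the constraint, forced by the link structure, that $\hat\rho_1\hat\rho_2=1$.

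\textbf{Step 3: matching index order with action order.} Since ECH is $\mathbb Z/2$ in each even degree with zero differential, the spectral invariant $c_k$ must equal the action of the unique generator of index $2k$. Action must be monotone in index along this sequence. For large $m,n$, this forces the level sets of action and of the index form to be compatible in direction. The resulting equation is $T_1^2/\hat\rho_1=T_2^2/\hat\rho_2$ (equivalently $T_2/T_1=\hat\rho_2$ when $\hat\rho_1\hat\rho_2=1$).

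\textbf{Step 4: reading off the volume.} Counting lattice points gives $c_k^2\sim kT_1^2/\hat\rho_1$. Comparing with $2\,\mathrm{vol}$ in the volume property yields
\[
\mathrm{vol}(M,\lambda)=T_1^2/\hat\rho_1 .
\]
Irrationality of $\hat\rho_i$ comes out of Step 1: a rational rotation number would make a suitable iterate degenerate. Perturbing that iterate would create, by Poincaré--Birkhoff type arguments on the associated disk map, additional simple orbits.
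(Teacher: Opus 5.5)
The paper does not prove this theorem; it imports it from \cite{CHHL23,CHHL26}. Judged as a stand-alone argument, your proposal has a genuine gap in Step 1, which carries most of the content of the statement. That content is (a) the two-or-infinity alternative for an \emph{arbitrary}, possibly degenerate, contact form on the tight $S^3$, and (b) the fact that exactly two simple orbits force both to be irrationally elliptic, so that $\lambda$ is nondegenerate.

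Step 1 gives no mechanism for either. ``The actions grow too slowly or too quickly'' is not an argument. ``A chain of $U$-curves through every point produces a global surface of section'' is exactly the hard theorem, which you assume rather than prove. Note also that for degenerate $\lambda$ the ECH index is not even defined, and the $c_k$ are only limits of spectral invariants of nondegenerate perturbations.

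Your argument for irrationality is, moreover, invalid. If $\hat\rho_1$ is rational and you perturb $\lambda$ so that a degenerate iterate splits, the extra orbits you get from Poincar\'e--Birkhoff are orbits of the \emph{perturbed} form. They do not contradict $\lambda$ itself having exactly two simple orbits. Any such argument must be run on the unperturbed flow or through a limiting argument on spectral invariants. One option is the return map of a disk-like global surface of section, once one exists: with exactly two orbits it is an area-preserving disk map whose only periodic point is a single interior fixed point.

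Without (b), Steps 2--4 have nothing to apply to. The evenness of all ECH indices, the vanishing of the differential, and the Conley--Zehnder terms $\sum_{j\le m}(2\lfloor j\hat\rho_1\rfloor+1)$ all require both orbits to be irrationally elliptic. The Hopf-link structure behind your $2mn$ term (linking number $1$, self-linking $-1$) is a further imported input.

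Conditionally on these inputs, Steps 2--4 do capture where the identity comes from. Rank one in each even degree, with zero differential, makes $(m,n)\mapsto I(\zeta_1^m\zeta_2^n)/2$ a bijection onto $\mathbb Z_{\ge 0}$. Hence $c_k$ is the action of the unique generator of index $2k$. Monotonicity of $c_k$, applied to the pure powers $\zeta_1^m$ and $\zeta_2^n$ (indices $\hat\rho_1m^2+O(m)$ and $\hat\rho_2n^2+O(n)$), gives $T_1^2/\hat\rho_1=T_2^2/\hat\rho_2$.

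Two corrections are needed. First, along $k=I(\zeta_1^m)/2\approx\hat\rho_1m^2/2$ one has $c_k=mT_1$, so $c_k^2\sim 2kT_1^2/\hat\rho_1$, not $kT_1^2/\hat\rho_1$. With your asymptotic, the volume property would give $\mathrm{vol}(M,\lambda)=T_1^2/(2\hat\rho_1)$. Second, $\hat\rho_1\hat\rho_2=1$ is not forced by the link structure; a Hopf link of Reeb orbits can have arbitrary rotation numbers when other orbits are present. It is an \emph{output} of the same index/action matching, which forces $\hat\rho_1m^2+2mn+\hat\rho_2n^2$ to be a multiple of $(T_1m+T_2n)^2$. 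It should therefore not be used as an input.
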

From this theorem, we obtain the following useful corollary.


\begin{cor}\label{cor: infini peri orbit1}
	Assume $A_0\geq0$ and $A_i,B_i>0,\,i=1,\cdots,\mathfrak{n}$. Let $\mathfrak{M}=H^{-1}(h)$ be the energy surface of the reduced Hamiltonian \eqref{equ: reduced Ham} and  $\zeta_p\subset \mathfrak M\cap \{p_z=z=0\}$ be the planar Kepler orbit on $\mathfrak M$ with minimal Reeb period $T_p$ and rotation number $\hat{\rho}_p=\hat\rho(\zeta_p)$. If
	$$
	\emph{vol}(\mathfrak{M},\lambda)\neq T_{p}^2/\hat{\rho}_p \text{ or } \hat{\rho}_p\in \mathbb{Q},
	$$
	then the system \eqref{equ: reduced Ham sys} has infinitely many periodic orbits on $\mathfrak{M}$.
\end{cor}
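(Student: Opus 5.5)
This corollary should follow directly from Theorem \ref{thm: CHHL23}, applied to the energy surface $\mathfrak M$ with the contact form $\lambda$ described in the introduction. We argue by contraposition. Suppose \eqref{equ: reduced Ham sys} has only finitely many periodic orbits on $\mathfrak M$; we show that then $\mathrm{vol}(\mathfrak M,\lambda)=T_p^2/\hat\rho_p$ and $\hat\rho_p\notin\mathbb Q$, which is exactly the negation of the hypothesis.

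First we verify that Theorem \ref{thm: CHHL23} applies. We work in the parameter range where $\mathfrak M$ is a regular compact energy surface, namely $-C^2<2h\varpi^2<-A_0^2$. By Proposition \ref{topo of energy surfaces 1}(i), $\mathfrak M$ is then homeomorphic to $\mathbb S^3$; it is in fact a smooth regular hypersurface, since $h$ exceeds the unique critical value. Because $H$ is mechanical, $\mathfrak M$ is of contact type in $(\mathbb R^4,\omega)$ by \cite[Theorem 4.8]{HZ94}, so there is a contact form $\lambda$ with $d\lambda=\omega|_{\mathfrak M}$. A contact-type hypersurface in $\mathbb R^4$ bounds a symplectic filling, namely the region it encloses with the restricted Liouville structure. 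Any fillable contact structure is tight, so $(\mathfrak M,\ker\lambda)$ is a tight three-sphere.

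Next we match the orbits. The Reeb flow of $\lambda$ is a reparametrization of the flow of $X_H$ on $\mathfrak M$, so periodic orbits of \eqref{equ: reduced Ham sys} on $\mathfrak M$ and simple Reeb orbits correspond bijectively as geometric sets. Finitely many periodic orbits therefore means finitely many simple Reeb orbits. By Theorem \ref{thm: CHHL23} there are then exactly two, $\zeta_1$ and $\zeta_2$.

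We must also check that $\zeta_p$ is one of them. The plane $\{z=p_z=0\}$ is invariant under \eqref{equ: reduced Ham sys}, since $\dot p_z=0$ when $z=0$. On this plane the dynamics reduce to the radial Kepler problem with effective potential $\varpi^2/(2r^2)-C/r$. The set $\mathfrak M\cap\{p_z=z=0\}$ is a single closed curve, traversed periodically, and this curve is $\zeta_p$. Hence $\zeta_p$ is a simple Reeb orbit with minimal period $T_p$, so $\zeta_p=\zeta_i$ for some $i$.

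Theorem \ref{thm: CHHL23} now gives $\mathrm{vol}(\mathfrak M,\lambda)=T_p^2/\hat\rho_p$, and it also says $\zeta_p$ is irrationally elliptic, so $\hat\rho_p\notin\mathbb Q$. This contradicts the hypothesis and proves the corollary.

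The main point needing care is tightness; fillability handles it. One should also confirm that the Seifert rotation number used in the corollary is the one appearing in Theorem \ref{thm: CHHL23}, which holds by definition.
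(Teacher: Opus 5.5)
Your proposal is correct and follows the paper's route: the paper derives this corollary directly from Theorem~\ref{thm: CHHL23} by the same contrapositive. Finitely many periodic orbits forces exactly two simple Reeb orbits, one of which must be $\zeta_p$, and this gives $\mathrm{vol}(\mathfrak M,\lambda)=T_p^2/\hat\rho_p$ with $\hat\rho_p\notin\mathbb Q$. You also make explicit three points the paper leaves implicit: the restriction to the compact regular range $-C^2<2h\varpi^2<-A_0^2$, tightness via fillability of the contact-type hypersurface $\mathfrak M\subset\mathbb R^4$, and the fact that $\zeta_p$ is a simple Reeb orbit.
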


When $A_0=0$ and $\mathfrak{n}=1$, the estimate of the rotation number $\hat \rho_p:=\hat \rho(\zeta_p)$ using the contact volume of the energy surface $\mathfrak M$ and the minimal Reeb period of planar Kepler orbit $\zeta_p$ for $B_1\in(0,1]$ has been obtained in \cite{HOQ26}.
\begin{thm}\label{thm: estimate of rotation number 2.2}
	(\cite{HOQ26})When $A_0=0$ and $\mathfrak{n}=1$, assume $-A_1^2<2h\varpi^2<0$ and let $\zeta_p\subset\{p_z=z=0\}$ be the planar Kepler orbit on $\mathfrak M$ with the minimal Reeb period $T_p>0$ and rotation number $\hat \rho_p$. Then
	\begin{itemize}
		\item[(i)] for $B_1\in (0,1/4)$, we have $\hat \rho_{p}>T_p^2/\mathrm{vol}(\mathfrak{M},\lambda)=\sqrt{B_1}$.
		\item[(ii)] for $B_1\in (1/4,1)$, we have $\hat \rho_{p}<T_p^2/\mathrm{vol}(\mathfrak{M},\lambda)=\sqrt{B_1}$.
		\item[(iii)] for $B_1\in \{1/4,1\}$, we have $\hat \rho_{p}=T_p^2/\mathrm{vol}(\mathfrak{M},\lambda)\in\{1/2,1\}$.
	\end{itemize}
	Moreover, the energy surface $\mathfrak M$ admits infinitely many periodic orbits.
\end{thm}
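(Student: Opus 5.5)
The statement is the last one shown, Theorem \ref{thm: estimate of rotation number 2.2}, which is quoted from \cite{HOQ26}. I will nonetheless sketch a proof in the present framework. The strategy is to compute both sides of the comparison explicitly for $A_0=0$, $\mathfrak n=1$, and then feed the result into Corollary \ref{cor: infini peri orbit1}.

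\textbf{Step 1 (volume and period).} I expect $T_p^2/\mathrm{vol}(\mathfrak M,\lambda)=\sqrt{B_1}$ exactly.
\begin{itemize}
\item By Stokes, $\mathrm{vol}(\mathfrak M,\lambda)=\int_{\{H\le h\}}\omega^2/2$, the Euclidean $4$-volume of the sublevel set.
\item Integrate out the momenta first: the fibre over $(r,z)$ is a disc of area $2\pi(h-V(r,z))$.
\item Substitute $z=w/\sqrt{B_1}$. Since the potential depends on $(r,z)$ only through $r$ and $r^2+B_1z^2$, this gives $\mathrm{vol}=B_1^{-1/2}\,\mathrm{vol}_{\mathrm{Kepler}}$, where $\mathrm{vol}_{\mathrm{Kepler}}$ is the volume for $B_1=1$.
\item The orbit $\zeta_p$ does not depend on $B_1$, so neither does $T_p=\int_{\zeta_p}p_r\,dr$.
\item For $B_1=1$ the system is the reduced spatial Kepler problem, which is periodic with all orbits of one period and has $\hat\rho_p=1$. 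The CHHL-type identity (or a direct computation) then gives $\mathrm{vol}_{\mathrm{Kepler}}=T_p^2$.
\end{itemize}
Combining these, $T_p^2/\mathrm{vol}=\sqrt{B_1}$.

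\textbf{Step 2 (rotation number).} The transverse linearization along $\zeta_p$, in the $(z,p_z)$ direction, decouples. After switching to the true anomaly $\theta$ and rescaling, it becomes the Hill equation \eqref{equ: Hill stability equation} with $\beta=B_1-1$, over the period $\theta\in[0,2\pi]$. The quantity $\hat\rho_p$ equals the rotation number of this equation, suitably normalized. Two explicit solutions pin down the boundary values:
\begin{itemize}
\item For $\beta=0$ the solutions are $\cos\theta,\sin\theta$, which gives $\hat\rho_p=1$.
\item For $B_1=1/4$ one checks that $x=(1+\fe\cos\theta)^{1/2}$ times a suitable factor is a $4\pi$-periodic solution, which gives $\hat\rho_p=1/2$.
\end{itemize}
Monotonicity of the Maslov-type index in $\beta$ shows that $\hat\rho_p$ is increasing in $B_1$.

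\textbf{Step 3 (comparison with $\sqrt{B_1}$).} This is the main obstacle: comparing $\hat\rho_p(B_1,\fe)$ with $\sqrt{B_1}$. The two curves cross exactly at $B_1\in\{1/4,1\}$, for every $\fe$. The plan is:
\begin{itemize}
\item Show that the degenerate curves, where $\hat\rho_p=\sqrt{B_1}$, are exactly the vertical lines $B_1=1/4$ and $B_1=1$ in the $(B_1,\fe)$ parameter rectangle.
\item To do so, use the explicit $\fe$-independent solutions at those two values, together with strict monotonicity of the index in $\beta$ and of $\sqrt{B_1}$.
\item Fix the sign on each interval by evaluating at $\fe=0$. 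There the equation has constant coefficients, so $\hat\rho_p=\sqrt{B_1}$ identically and the comparison must come from first-order terms in $\fe$.
\end{itemize}
Since the $\fe=0$ case is degenerate, I would instead take $\fe\to1$ asymptotics, or compute the second-order perturbation in $\fe$. The sign of that correction determines cases (i) and (ii).

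\textbf{Step 4 (conclusion).} In cases (i) and (ii) the volume is not equal to $T_p^2/\hat\rho_p$. In case (iii) $\hat\rho_p$ is rational. In either situation Corollary \ref{cor: infini peri orbit1} yields infinitely many periodic orbits.
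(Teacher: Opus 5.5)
\textbf{Main gap: Step~3 contains no argument, and the one you sketch cannot work.} All of (i)--(ii) lives in Step~3. Even granting equality at $B_1\in\{1/4,1\}$, an explicit solution there only shows $\hat\rho_p=\sqrt{B_1}$ \emph{at} that value; nothing in your plan excludes further crossings. Your appeal to ``strict monotonicity of the index in $\beta$'' is false. $\hat\rho_p$ is only non-decreasing in $\beta$, and it is constant on every instability interval: for instance $\hat\rho_p\equiv 1/2$ on $[\beta_1(\fe,-1),\beta_2(\fe,-1)]$, which has positive length for $\fe>0$ by Theorem~\ref{thm: prop of deg curve}(c). Strict monotonicity would not help anyway, since two increasing functions of $B_1$ can cross any number of times. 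The sign is deferred to an unperformed second-order (or $\fe\to1$) expansion. Such an expansion fixes the sign only for $\fe$ near $0$ (or $1$), so without the unproved claim about the crossing set it says nothing on all of $0<\fe<1$. What is missing is a quantitative bound on $\hat\rho_p$, uniform in $\fe$, on each $\beta$-range. Note that once $-3/4$ is known to lie in the $(-1)$-tongue, the locking $\hat\rho_p\equiv1/2$ gives (i) and (ii) for free on $[1+\beta_1(\fe,-1),1/4)$ and $(1/4,1+\beta_2(\fe,-1)]$. The real work is on the elliptic ranges $B_1\in(0,1+\beta_1(\fe,-1))$ and $B_1\in(1+\beta_2(\fe,-1),1)$, where $\hat\rho_p$ actually varies. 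There one needs estimates of the type the paper proves for $\beta>0$ in Theorem~\ref{thm: comparison mean index 4}: Sturm comparison of the Hill equation, after the blow-up time change $d\theta=q\,d\tau$, with constant-coefficient equations. The paper itself does not reprove this statement; it imports it from \cite{HOQ26}.

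\textbf{Step~2 contains a false claim.} At $B_1=1/4$, i.e.\ $\beta=-3/4$, there is no $4\pi$-periodic solution of any form. By Theorem~\ref{thm: prop of deg curve}(c),(d), $\beta_{1}(\fe,-1)=-\tfrac34-\tfrac38\fe+o(\fe)$ and $\beta_{2}(\fe,-1)=-\tfrac34+\tfrac38\fe+o(\fe)$. So $-1$ is not a Floquet multiplier, and $\zeta_p$ is hyperbolic there: $\beta=-3/4$ sits in the middle of the first resonance tongue. The value $\hat\rho_p=1/2$ holds for a different reason. On $[\beta_1(\fe,-1),\beta_2(\fe,-1)]$ the monodromy has spectrum in $(-\infty,0)$, so the mean index is an odd integer, hence locked at $1$ by continuity from $\fe=0$. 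You then must show that $-3/4$ stays in this interval for every $\fe\in(0,1)$. One half is easy: inserting $\cos(\theta/2)\in D(-1,2\pi)$ into the Rayleigh quotient of $(-d^2/d\theta^2-1)x=\beta x/(1+\fe\cos\theta)$ gives $\beta_1(\fe,-1)\le-\tfrac34\,\fe s/(\fe+s-1)<-\tfrac34$, where $s=\sqrt{1-\fe^2}$. The bound $\beta_2(\fe,-1)\ge-3/4$ needs its own argument.

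Step~1 is correct in substance; it is Theorem~\ref{thm: vol of M} via the right rescaling $z\mapsto z/\sqrt{B_1}$. Two small slips there. First, $\int_{\mathfrak M}\lambda\wedge d\lambda=\int_{\{H\le h\}}\omega\wedge\omega$ is twice the Euclidean volume, not $\int\omega^2/2$. Second, the CHHL formula does not apply at $B_1=1$, where every reduced orbit is closed. The correct input is that this flow is Zoll with common Reeb period $T_p$, so $\mathrm{vol}=T_p^2$, or the direct computation of \cite{HQY25}.
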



\section{The volume estimates}\label{sec: the volume estimate}

In this section, we derive the relation between the contact volume of the energy surface and the minimal Reeb period of the planar Kepler orbit. For the reduced Gutzwiller-type anisotropic Kepler problem, we regard $\zeta_p$ as a Reeb orbit of the Reeb vector field $R$. The contact volume and the minimal Reeb period of $\zeta_p$ are defined respectively as $\mathrm{vol}(\mathfrak M,\lambda):=\int_{\mathfrak M} \lambda \wedge d\lambda$ and  $T_p:=\int_{\zeta_p}\lambda=\int_{\Upsilon}dp_r\wedge dr$. When $A_0=0$ and $\mathfrak{n}=1$, the relation between $\mathrm{vol}(\mathfrak M,\lambda)$ and $T_p$ has been established in \cite{HQY25} for $B_1=1$ and \cite{HOQ26} for $B_1>0$.
\begin{thm}[\cite{HQY25} Lemma 2.1; \cite{HOQ26} Proposition 2.3]\label{thm: vol of M}
	When $A_0=0$ and $\mathfrak{n}=1$, assume $-A_1^2<2h\varpi^2<0$, $\fe=(1+2h\varpi^2/A_1^2)^{1/2}$, we have
	$$
	\mathrm{vol}(\mathfrak{M},\lambda)=\frac{T_p^2}{\sqrt{B_1}},\quad T_p=2\pi\varpi\left(\frac{1}{\sqrt{1-\fe^2}}-1\right).
	$$
\end{thm}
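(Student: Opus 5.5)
The proof splits into two independent computations: the Reeb period $T_p$ as an action integral along the planar Kepler orbit, and the contact volume as a Lebesgue volume of the sublevel set. After a change of variables in $z$, the latter reduces to the isotropic case $B_1=1$.

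\textbf{The period $T_p$.} On $\{p_z=z=0\}$ the orbit $\zeta_p$ is the planar Kepler ellipse with angular momentum $\varpi$, energy $h$ and coupling $A_1$. By definition $T_p=\int_{\zeta_p}\lambda=\oint p_r\,dr$, which is the classical radial action. Write $p_r=\pm\sqrt{2h+2A_1/r-\varpi^2/r^2}$ between the turning points $r_\pm=\frac{A_1(1\pm\fe)}{-2h}$ found in Proposition~\ref{topo of energy surfaces 1}. The standard contour (or substitution) evaluation of $2\int_{r_-}^{r_+}\sqrt{2h+2A_1/r-\varpi^2/r^2}\,dr$ gives $2\pi\bigl(A_1/\sqrt{-2h}-\varpi\bigr)$. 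Since $1-\fe^2=-2h\varpi^2/A_1^2$, we have $A_1/\sqrt{-2h}=\varpi/\sqrt{1-\fe^2}$, which yields the stated formula for $T_p$.

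\textbf{The volume: reduction to $B_1=1$.} By Stokes and $d\lambda=\omega$, $\mathrm{vol}(\mathfrak M,\lambda)=\int_{\{H\le h\}}\omega\wedge\omega=2\,\mathrm{Leb}(\{H\le h\})$, where the orientation is fixed by the outward normal. For fixed $(r,z)$, the momentum fibre is a disc of radius $\sqrt{2(h-V)}$ and area $2\pi(h-V)$. Hence
$$\mathrm{vol}(\mathfrak M,\lambda)=4\pi\iint (h-V(r,z))_+\,dr\,dz .$$
Substituting $w=\sqrt{B_1}\,z$ turns $V$ into the spatial Kepler effective potential $\frac{\varpi^2}{2r^2}-\frac{A_1}{\sqrt{r^2+w^2}}$, with Jacobian $1/\sqrt{B_1}$. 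So $\mathrm{vol}_{B_1}=\mathrm{vol}_{1}/\sqrt{B_1}$, and it remains to prove $\mathrm{vol}_1=T_p^2$. Note that $T_p$ does not depend on $B_1$.

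\textbf{The isotropic integral $\mathrm{vol}_1=T_p^2$.} Use polar coordinates $r=\varrho\sin\phi$, $w=\varrho\cos\phi$ with $\phi\in(0,\pi)$, and set $c(\phi)=\varpi^2/(2\sin^2\phi)$. The inner integral is
$$\int_{\varrho_-}^{\varrho_+}\Bigl(h\varrho+A_1-\frac{c(\phi)}{\varrho}\Bigr)d\varrho ,$$
where $\varrho_\pm$ are the roots of $h\varrho^2+A_1\varrho-c=0$. It is elementary: using $\varrho_++\varrho_-=-A_1/h$ and $\varrho_+\varrho_-=-c/h$, it equals an expression in $\delta(\phi)=\sqrt{A_1^2+2hc(\phi)}$ plus a term $c\log(\varrho_+/\varrho_-)$. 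The $\phi$-integral is restricted to the range where $\delta$ is real, i.e.\ $\sin\phi\ge\sqrt{-2h}\,\varpi/A_1$.

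I expect this outer $\phi$-integration, especially the logarithmic term, to be the main obstacle. The first thing I would try is to avoid it altogether. In the genuinely spatial Kepler problem the volume can instead be computed in action--angle (Delaunay) variables. There, $\{H\le h\}$ for fixed $L_z=\varpi$ corresponds to $\varpi\le G\le L\le L_h:=A_1/\sqrt{-2h}$, with the angles ranging over a torus. This gives a volume proportional to $\int_\varpi^{L_h}(L-\varpi)\,dL\propto (L_h-\varpi)^2$, and the normalization is fixed by checking the constant. The result is $(2\pi)^2(L_h-\varpi)^2=T_p^2$, consistent with $T_p=2\pi(L_h-\varpi)$. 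If the action--angle route is hard to justify rigorously on the reduced space, I would fall back on the direct computation: integrate by parts in $\phi$ to remove the logarithm, then evaluate the remaining algebraic integral with the substitution $u=\cos\phi$.
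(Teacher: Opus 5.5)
Your proposal is correct. The paper gives no proof of this statement; it only quotes it from the two cited works. Your reduction fits what the paper does use elsewhere. The Stokes identity $\mathrm{vol}(\mathfrak M,\lambda)=2\int_{\{H\le h\}}dp_r\wedge dr\wedge dp_z\wedge dz$ appears in the proof of the volume comparison in Section 3. Fibre integration over the momentum discs and the substitution $w=\sqrt{B_1}\,z$ then account for the factor $1/\sqrt{B_1}$. Your radial-action value $T_p=2\pi(L_h-\varpi)$, with $L_h=A_1/\sqrt{-2h}=\varpi/\sqrt{1-\fe^2}$, is also right.

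The step you flag as the main obstacle closes quickly along your fallback route. First, a slip: the roots of $h\varrho^2+A_1\varrho-c=0$ involve $\delta=\sqrt{A_1^2+4hc}$, not $\sqrt{A_1^2+2hc}$. Your range $\sin\phi\ge\sqrt{-2h}\,\varpi/A_1$ is the one that goes with the correct discriminant.

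Write $s=\varpi/L_h=\sqrt{1-\fe^2}$, $\sin\phi_0=s$, and $t=\sqrt{1-s^2/\sin^2\phi}$. The inner integral then equals $L_h^2\bigl[t-\frac{s^2}{2\sin^2\phi}\log\frac{1+t}{1-t}\bigr]$.

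Integrate the logarithmic term by parts against $d(-\cot\phi)$. There are no boundary terms, since $t=0$ at the endpoints, and $\frac{d}{d\phi}\log\frac{1+t}{1-t}=2\cot\phi/t$. Then set $u=\cos\phi$ and $a^2=1-s^2$. Everything reduces to $\int_{-a}^{a}\frac{du}{\sqrt{a^2-u^2}}=\pi$ and $\int_{-a}^{a}\frac{du}{(1-u^2)\sqrt{a^2-u^2}}=\frac{\pi}{s}$, which give
\[
\int_{\phi_0}^{\pi-\phi_0} t\,d\phi=\pi(1-s),\qquad \int_{\phi_0}^{\pi-\phi_0}\frac{1}{\sin^2\phi}\log\frac{1+t}{1-t}\,d\phi=\frac{2\pi(1-s)}{s}.
\]
Hence $\iint(h-V_1)_+\,dr\,dw=\pi L_h^2(1-s)^2=\pi(L_h-\varpi)^2$, and $\mathrm{vol}_1=4\pi^2(L_h-\varpi)^2=T_p^2$.

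Your Delaunay computation gives the same value, $2\cdot(2\pi)^2\cdot\frac12(L_h-\varpi)^2$. It is legitimate because Delaunay variables are canonical on the reduced space away from the measure-zero set of circular and equatorial orbits. The elementary computation above simply spares you that justification.
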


To estimate the contact volume in the general case, we consider the following anisotropic Kepler problem with $\mathfrak{n}=1$:

\begin{equation}\label{equ;anso compa}
	\hat{H}_0 (p_r,p_z,r,z)  =\frac{1}{2}(p_r ^{2}+p_z ^{2})+\hat{V}_0 (r,z),
\end{equation}	
with
\begin{equation}
	\hat{V}_0(r,z)=\frac{\varpi^{2}}{2r^{2}}-\frac{C}{\sqrt{r^{2}+(D/C) z^{2}}},
\end{equation}
where $$D=\sum_{i=1}^{\mathfrak{n}}A_iB_i\text{  and  } C=\sum_{i=0}^{\mathfrak{n}}A_i.
$$

Denote $\hat{\mathfrak{M}}_0=\hat{H}_0 ^{-1}(h)$, we can easily verify that the planar Kepler orbit $\hat \zeta_p$ of system \eqref{equ;anso compa} coincide with $\zeta_p$ of system \eqref{equ: reduced Ham sys} and they have the same minimal Reeb period $T_p$. For the contact volume of energy surface, we obtain the following estimate.
\begin{prop}\label{prop, vol1 vol2}
	$\mathrm{vol} (\mathfrak{M},\lambda)\geq\mathrm{vol}(\hat{\mathfrak{M}}_0,\hat{\lambda}_0)$, equality holds if and only if $A_0=0$ and $B_1=B_2=\cdots=B_\mathfrak{n}$.
\end{prop}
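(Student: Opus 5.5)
The plan is to reduce both contact volumes to integrals over configuration space, and then compare the two potentials pointwise using Jensen's inequality.

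\textbf{Step 1: volumes as integrals over configuration space.} Since $\mathfrak M$ and $\hat{\mathfrak M}_0$ are boundaries of the compact sublevel sets $\{H\le h\}$ and $\{\hat H_0\le h\}$, and $d\lambda=\omega$, Stokes' theorem gives
$$\mathrm{vol}(\mathfrak M,\lambda)=\int_{\{H\le h\}}\omega\wedge\omega ,$$
which is a fixed positive multiple of the Lebesgue measure of $\{H\le h\}\subset\mathbb R^4$. The same holds for $\hat{\mathfrak M}_0$ with the same constant. This step uses that the region is star-shaped in the momenta, so $\lambda$ may be taken as, for example, the Liouville form plus an exact correction; the integral of $\lambda\wedge d\lambda$ does not depend on that choice. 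For a mechanical Hamiltonian, integrating out the momentum disc $\{|p|^2\le 2(h-V)\}$ gives
$$\mathrm{Leb}\{H\le h\}=2\pi\int_{\mathbb R^+\times\mathbb R}(h-V(r,z))_+\,dr\,dz .$$
The analogous formula holds for $\hat V_0$. It therefore suffices to prove $V\le \hat V_0$ pointwise on $\mathbb R^+\times\mathbb R$.

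\textbf{Step 2: pointwise comparison via Jensen.} Fix $r>0$ and set $u=z^2/r^2$ and $g(s)=(1+s)^{-1/2}$, which is strictly convex on $[0,\infty)$. Put $B_0:=0$. Then
$$V(r,z)=\frac{\varpi^2}{2r^2}-\frac{1}{r}\sum_{i=0}^{\mathfrak n}A_i\,g(B_iu), \qquad \hat V_0(r,z)=\frac{\varpi^2}{2r^2}-\frac{C}{r}\,g\!\left(\frac{D}{C}u\right).$$
The weights $A_i/C$ for $i=0,\dots,\mathfrak n$ are nonnegative and sum to $1$, and $\sum_i (A_i/C)B_iu=(D/C)u$. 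Jensen's inequality then gives
$$\sum_{i=0}^{\mathfrak n}\frac{A_i}{C}\,g(B_iu)\ \ge\ g\!\left(\frac{D}{C}u\right),$$
hence $V\le\hat V_0$ everywhere. Consequently $(h-V)_+\ge(h-\hat V_0)_+$, and integrating yields $\mathrm{vol}(\mathfrak M,\lambda)\ge\mathrm{vol}(\hat{\mathfrak M}_0,\hat\lambda_0)$.

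\textbf{Step 3: the equality case.} For $u>0$, strict convexity of $g$ makes Jensen's inequality strict unless all $B_i$ carrying positive weight coincide. If $A_0>0$, this forces $B_0=0$ to equal the other $B_i>0$, which is impossible. So equality in Jensen for some $u>0$ holds exactly when $A_0=0$ and $B_1=\dots=B_{\mathfrak n}$.

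Conversely, if this condition fails, then $V<\hat V_0$ at every point with $z\neq0$. Near the point $(\varpi^2/C,0)$ we have $\hat V_0<h$, because it is the minimum and $h$ exceeds the minimum value $-C^2/(2\varpi^2)$. Thus $(h-V)_+>(h-\hat V_0)_+$ on an open set, and the inequality of volumes is strict. If the condition holds, then $V=\hat V_0$ identically, since $D/C=B_1$, and the volumes are equal.

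\textbf{Main obstacle.} The only delicate point is Step 1: justifying that $\int_{\mathfrak M}\lambda\wedge d\lambda$ equals $\int_{\{H\le h\}}\omega^2$ for the contact form used in the paper. This should follow because any two primitives of $\omega$ restricted to $\mathfrak M$ differ by a closed form, which is exact on $\mathbb S^3$ and hence does not change the integral. The constant relating $\omega^2$ to Lebesgue measure is common to both sides, so it plays no role in the comparison.
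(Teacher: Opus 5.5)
Your proposal is correct and follows essentially the same route as the paper. Both arguments apply Jensen's inequality to the convex function $(x+r^2)^{-1/2}$ (your $g$ after rescaling) with weights $A_i/C$ and $x_0=0$ to get $V\le\hat V_0$, then use Stokes' theorem to turn both contact volumes into twice the Lebesgue measure of the sublevel sets, where $\{\hat H_0\le h\}\subset\{H\le h\}$. Your handling of the strict inequality is more careful than the paper's: you note that $V=\hat V_0$ on $z=0$ and exhibit an open set near $(\varpi^2/C,0)$ where the sublevel sets differ, whereas the paper asserts $V<\hat V_0$ everywhere.
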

\begin{proof}
	When $A_0=0$ and $B_1=B_2=\cdots=B_\mathfrak{n}$, the result is obvious. Then we assume $A_0>0$ or $B_i$ are not all equal. We can easily verify that the function $f(x)=(x+r^2)^{-1/2}$ is strictly convex for $x>-r^2$, then using Jensen inequality, we have for $\forall \lambda_i\geq0,\,\sum_{i=0}^{\mathfrak{n}}\lambda_i=1$,
	$$f\left(\sum_{i=0}^{\mathfrak{n}}\lambda_ix_i\right)\leq\sum_{i=0}^{\mathfrak{n}}\lambda_if(x_i),
	$$equality holds if and only if $x_0=x_1=\cdots=x_\mathfrak{n}$.
	Let $\lambda_i=A_i/C$, $x_0=0$ and $x_i=B_iz^2,\,i=1,\cdots,\mathfrak{n}$, we obtain
	$$\frac{A_0}{r}+\sum_{i=1}^{\mathfrak{n}}A_i(r^2+B_iz^2)^{-1/2}>\left(\sum_{i=0}^{\mathfrak{n}}A_i\right)\cdot\left(r^2+\frac{\sum_{i=1}^{\mathfrak{n}}A_iB_i}{\sum_{i=0}^{\mathfrak{n}}A_i}z^2\right)^{-1/2},
	$$therefore $V(r,z)<\hat{V}_0 (r,z)$.
	
	Using Stokes formular, the contact volume $\mathrm{vol}(\mathfrak{M},\lambda)=2\int_{\mathfrak{B}}dp_r\wedge dr\wedge dp_z\wedge dz$ and $\mathrm{vol}(\hat{\mathfrak{M}}_0,\hat{\lambda}_0)=2\int_{\hat{\mathfrak{B}}_0}dp_r\wedge dr\wedge dp_z\wedge dz$, where $\mathfrak{B}=\{(p_r,p_z,r,z)|H\leq h\}$ and $\hat{\mathfrak{B}}_0=\{(p_r,p_z,r,z)|\hat{H}_0\leq h\}$ are bounded by $\mathfrak{M}$ and $\hat{\mathfrak{M}}_0$ respectively. The above estimate implies $\hat{\mathfrak{B}}_0\subset\mathfrak{B}$, so $\mathrm{vol} (\mathfrak{M},\lambda)>\mathrm{vol}(\hat{\mathfrak{M}}_0,\hat{\lambda}_0)$.
\end{proof}
\section{Estimates of the rotation number}\label{sec:  estimates of the rotation number}

\subsection{The planar Kepler orbit} \label{sec: the planar Kepler orbit}

Assume $-C^{2}<2h\varpi^2<-A_0^2$. Recall that $\zeta_p=(\dot r_p,0,r_p,0)\subset \mathfrak M\cap \{p_z=z=0\}$ is the planar Kepler orbit of the system \eqref{equ: reduced Ham sys} on the energy surface $\mathfrak M$ of the Hamiltonian $H$ in \eqref{equ: reduced Ham}, which satisfies the equation $\ddot{r}_{p}=\varpi^2/r_{p}^3-C/r_p^2$. We use a similar technical method to compute the rotation number of $\zeta_p$ in \cite{HLOYS23}, where the authors dealt with the Euler orbit. This method were also used in \cite{HOQ26,HLOQS26}. Let $\bar T_p$ be the minimal period of $\zeta_p$, which is not same as the minimal Reeb period $T_{p}$. The linearized flow of $\zeta_p$ is given by
$$
\dot{\gamma}_{p}(t)=J_4\mathcal{B}(t)\gamma_{p}(t),\ \ \gamma_{p}(0)=I_2.
$$	
where $\mathcal{B}(t):=\mathrm{diag}(1,1,(3\varpi^2-2C r_p)/r_p^{4},D/r_p^{3})$ and $J_{2n}$ denotes the standard symplectic matrix in $\mathrm{Sp}(2n)$. This is a decoupled system with the following two subsystems
\begin{equation}\label{equ: linear equ1}
\dot{\gamma}_{1}(t)=J_2\left(\begin{array}{cc}
	1 & 0\\
	0 & (3\varpi^2-2C r_p(t))/r_p^4(t)
\end{array}\right)\gamma_{1}(t),\quad \gamma_1(0)=I_2,
\end{equation}
and
\begin{equation}\label{equ: linear equ2}
\dot{\gamma}_{2}(t)=J_2\left(\begin{array}{cc}
	1 & 0\\
	0 & D/r_p^3(t)
\end{array}\right)\gamma_{2}(t),\quad \gamma_2(0)=I_2,
\end{equation}
where $\gamma_1,\gamma_2$ denote the associated fundamental solutions of these two subsystems. In particular, the tangent vector field $\xi_1(t)=c\cdot (\ddot{r}_p(t), \dot r_p(t)),c\in \mathbb R_+$, of $\zeta_p$ is a periodic solution of \eqref{equ: linear equ1}. Choose a $c>0$ and a time shift so that $\xi_1(0)=(1,0)^T$ and then $\xi_1$ becomes the first column of $\gamma_{1}$. Let $\Lambda_D:=\mathbb{R}\oplus\{0\}$ be a Lagrangian subspace of $\rr^2$. Since $\mathcal B(t)>0,\forall t\in \mathbb R$, we compute from \eqref{mean index2} that
$$
\hat{i}(\gamma_{1})=\lim_{m\to +\infty}\frac{1}{m}\sum_{0<t\leq m\bar T_p} \operatorname{dim} \gamma_{1}\left(\hat{t}\right) \Lambda_{D} \cap \Lambda_{D}=2.
$$
where $\sum_{0<t\leq m\bar T_p} \operatorname{dim}(\gamma_{1}(t)\Lambda_{D} \cap \Lambda_{D})=\sharp\{t:\dot{r}_{p}(t)=0,t\in(0,m\bar T_p]\}
=2m-1$ for every $m\in \mathbb Z_+$.

In order to compute the mean index $\hat{i}(\gamma_{2})$, we consider $t=t(\theta)$ based on the relation $\dot{\theta}(t)=\varpi/r^2_{p}(t)$ and indicate $'=d/d\theta$, then \eqref{equ: linear equ2} can be rephrased as
$$\gamma_{2}'(\theta)=J_2\left(\begin{array}{cc}
	r_p^2(\theta)/\varpi & 0\\
	0 & D/(\varpi r_p(\theta))
\end{array}\right)\gamma_{2}(\theta),
$$
where $r_p(\theta)=\frac{\varpi^2/C}{1+\mathfrak e \cos\theta}$. Using a time-dependent linear symplectic transformation
$$
\hat\gamma_{p}(\theta):=\mathcal{R}(\theta)\gamma_{2}(\theta),\quad \theta\in \mathbb R/2\pi\mathbb Z,
$$
where
$$\mathcal{R}(\theta):=\left(\begin{array}{cc}
	\frac{r_p}{\sqrt{\varpi}} & \frac{-\sqrt{\varpi}r_p'}{r_p^2}\\
	0 &\frac{\sqrt{\varpi}}{r_p}
\end{array}\right)=\left(\begin{array}{cc}
\frac{r_p}{\sqrt{\varpi}} & -\frac{\sqrt{\varpi}}{r_p}\frac{\fe\sin\theta}{1+\fe\cos\theta}\\
0 &\frac{\sqrt{\varpi}}{r_p}
\end{array}\right)=\left(\begin{array}{cc}
\frac{r_p}{\sqrt{\varpi}} & -\frac{C\fe\sin\theta}{\varpi^{3/2}}\\
0 &\frac{\sqrt{\varpi}}{r_p}
\end{array}\right).
$$
we further obtain the Hill stability equation below as \eqref{equ: Hill stability equation}
\begin{equation}\label{equ: linearized equation rp}
	\hat\gamma'_{p}(\theta)=J_2\left(\begin{array}{cc}
		1 & 0\\
		0 & 1+\frac{\beta}{1+\fe\cos\theta}
	\end{array}\right)\hat \gamma_{p}(\theta),
\end{equation}
where $\beta=D/C-1$. Since the Maslov index of the symplectic loop is $\mathcal R(\theta),\theta\in \mathbb R/2\pi \mathbb Z$, is zero, we have $\hat{i}(\gamma_{2})=\hat{i}(\hat \gamma_{p})$ and $\hat{i}(\gamma_{p})=\hat{i}(\gamma_{1})+\hat{i}(\gamma_{2})=2+\hat{i}(\hat \gamma_p)$.
Moreover, since the trivialization $\tau$ is induced by the normal vector field $\partial_{p_z}$ of the disk $\Sigma$, we obtain the rotation numbers of $\zeta_p$ as
\begin{equation*}\label{equ: rotation number}
\hat \rho_{p}=\frac{\hat{i}(\hat \gamma_{p})}{2}\quad \text{and}\quad
\rho_{p}=\frac{\hat{i}(\gamma_{p})}{2}=1+\hat \rho_p,\quad \forall \beta\in(-1,+\infty).
\end{equation*}
Here $\hat\rho_p:=\hat\rho(\zeta_p)$ and $\rho_p:=\rho(\zeta_p)$ are introduced in both Sections \ref{sec: Reeb flow and Rotation number} and \ref{sec: App}.







\subsection{The degenerate curves }\label{sec: the degenerate curves }
Under the above analysis, we consider the following self-adjoint Fredholm operator on $L^2([0,2\pi], \mathbb{C})$
$$
\mathcal{A}=\mathcal A(\beta,\mathfrak e):=-\frac{d^2}{d\theta^2}-1-\frac{\beta}{1+\fe\cos \theta},\quad \forall\  1+\beta>0,\ 0<\fe<1,
$$
with domain $D(\omega,2\pi):=\left\{W^{2,2}([0,2\pi], \mathbb{C}^n): y(2\pi)=\omega y(0), \dot{y}(2\pi)=\omega \dot{y}(0)\right\}$ for any $\omega\in\mathbf{U}$. We call the condition in $D(\omega,2\pi)$ as the $\omega$-boundary condition. Then $\mathcal A x=0$ is the Hill stability equation \eqref{equ: Hill stability equation}. Let $\nu_\omega(\mathcal A):=\dim_{\mathbb C}\ker \mathcal A$ denote the nullity of $\mathcal A$ and let $m^-_\omega(\mathcal{A})$ denote the Morse index of $\mathcal A$, i.e. the total multiplicity of the negative eigenvalues. From the relations \eqref{index equ} in Section \ref{sec: App}, we obtain
$$
m^-_{\omega}(\mathcal{A})=i_{\omega}(\gamma_{\beta,\mathfrak{e}}), \quad \nu_{\omega}(\mathcal{A})=\nu_{\omega}(\gamma_{\beta,\mathfrak{e}}),
$$
where $\gamma_{\beta,\mathfrak{e}}:=\hat \gamma_p$ is the fundamental solution of equation \eqref{equ: linearized equation rp}, $i_\omega(\gamma_{\beta,\mathfrak{e}})$ and $\nu_\omega(\gamma_{\beta,\mathfrak{e}})$ denote the $\omega$-index and $\omega$-nullity of $\gamma_{\beta,\mathfrak e}$, respectively. We say $\mathcal{A}$ is $\omega$-degenerate if $v_{\omega}(\mathcal{A})\neq 0$. Let $\beta_j=\beta_j(\fe,\omega), j\in\mathbb{Z}_{+}$, denote the $j$-th eigenvalue of the self-adjoint operator $(1+\fe\cos\theta)(-d^2/d\theta^2-1)$ on $D(\omega,2\pi)$ counting the multiplicity so that
$\beta_1\leq\beta_2\leq\cdots\leq\beta_j\leq\cdots$. In particular, $\beta_j(\fe,\omega)=\beta_j(\fe,\bar{\omega}),\forall \omega\in \mathbf{U}$ and $\mathcal{A}(\beta_j(\mathfrak{e},\omega),\mathfrak{e})$ is $\omega$-degenerate for every $j\in \mathbb Z_+$.

Define $\Gamma_j(\omega):=\{(\beta_j(\fe,\omega),\fe): 0\leq \fe<1\},\forall j\in \mathbb Z_+$, as the $j$-th $\omega$-degenerate curve of the Hill stability equation \eqref{equ: Hill stability equation}. We summarize the properties of the $\omega$-index and degenerate curves as below.
\begin{thm}[\cite{HLOYS23,HOT23}]\label{thm: prop of deg curve}
Assume $0\leq \mathfrak{e}<1,1+\beta>0$ and $\omega\in\mathbf{U}$. The following statements hold for the $\omega$-index:
\begin{itemize}
\item[$(a)$] The index $i_{\omega}(\gamma_{\beta,\mathfrak{e}})$ is non-decreasing with $\beta$ for every $1+\beta>0$. If $\beta=-1$, then $i_{\omega}(\gamma_{-1,\mathfrak{e}})=0$.
\item[$(b)$] The map $\beta\mapsto i_{\omega}(\gamma_{\beta,\mathfrak{e}})$ only increases at $\beta=\beta_{j}(\mathfrak{e},\omega),\forall j\in\mathbb{Z}_{+}$. Moreover, $\lim_{\epsilon\rightarrow0}i_{\omega}(\gamma_{\beta+\epsilon,\mathfrak{e}})
=i_{\omega}(\gamma_{\beta,\mathfrak{e}})+\nu_{\omega}(\gamma_{\beta,\mathfrak{e}})$.
\item[$(c)$] For every $k\in\mathbb{Z}_{+}, 0<\mathfrak{e}<1, \omega\in\mathbf{U}\setminus\{1, -1\}$, we have
$$\begin{aligned}
-1&=\beta_{1}(\mathfrak{e},1)<\beta_{1}(\mathfrak{e},\omega)<\beta_{1}(\mathfrak{e},-1)<\beta_{2}(\mathfrak{e},-1)<\beta_{2}(\mathfrak{e},\omega)\\
<0=\beta_{2}(\mathfrak{e},1)&=\beta_{3}(\mathfrak{e},1)<\beta_{3}(\mathfrak{e},\omega)<\beta_{3}(\mathfrak{e},-1)<\beta_{4}(\mathfrak{e},-1)
<\beta_{4}(\mathfrak{e},\omega)\\
<\beta_{4}(\mathfrak{e},1)&=\beta_{5}(\mathfrak{e},1)<\cdots\\
<\beta_{2k}(\mathfrak{e},1)&=\beta_{2k+1}(\mathfrak{e},1)<\beta_{2k+1}(\mathfrak{e},\omega)<\beta_{2k+1}(\mathfrak{e},-1)
<\beta_{2k+2}(\mathfrak{e},-1)<\beta_{2k+2}(\mathfrak{e},\omega)\\
<\beta_{2k+2}(\mathfrak{e},1)&=\beta_{2k+3}(\mathfrak{e},1)<\cdots.
\end{aligned}$$
For every $k\in\mathbb{Z}_+$ and $\mathfrak{e}=0$, we have
\begin{gather*}
\beta_{2k}(0,1)=\beta_{2k+1}(0,1)=k^2-1,\quad \beta_{2k-1}(0,-1)=
\beta_{2k}(0,-1)=(k-1/2)^2-1,\\
\beta_{2k-1}(0,e^{2\pi i\nu})=(k-1+\nu)^2-1,\quad \beta_{2k}(0,e^{2\pi i\nu})=(k-\nu)^2-1,\quad \forall \nu\in(0,1/2).
\end{gather*}

\item[$(d)$] For every $\omega\in\mathbf{U},j\in \mathbb Z_+$, $\Gamma_{j}(\omega)$ is a real analytic curve, i.e. $\beta_{j}(\mathfrak{e},\omega)$ is an analytic function of $\mathfrak{e}$ on $(0,1)$. Moreover, $\partial_{\mathfrak{e}}\beta_{j}(0,\omega)=0$ for every $\beta_{j}(0,\omega)\neq -3/4=\beta_{1}(0,-1)=\beta_{2}(0,-1)$ and $-\partial_{\mathfrak{e}}\beta_{1}(0,-1)=\partial_{\mathfrak{e}}\beta_{2}(0,-1)=3/8$.

\item[$(e)$] For every $\omega\in\mathbf{U}$, we have $\lim_{e\rightarrow1}\beta_{1}(\mathfrak{e},\omega)=-1$, $ \lim_{\mathfrak{e}\rightarrow1}\beta_{j}(\mathfrak{e},\omega)=0,j=2,3$, and
$$\lim_{\mathfrak{e}\rightarrow1}\beta_{n}(\mathfrak{e},\omega)=1/8,\quad \forall n\geq 4.$$

\end{itemize}
\end{thm}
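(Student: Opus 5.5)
Since $\mathcal A(\beta,\fe)$ is affine and strictly decreasing in $\beta$, the plan is to rewrite the $\omega$-degeneracy problem as a weighted eigenvalue problem. $\mathcal A(\beta,\fe)y=0$ is equivalent to
$$L_\fe y:=(1+\fe\cos\theta)\Big(-\frac{d^2}{d\theta^2}-1\Big)y=\beta y ,$$
and $L_\fe$ is self-adjoint on $D(\omega,2\pi)$ with respect to the weighted inner product $\langle u,v\rangle_\fe=\int_0^{2\pi}u\bar v\,(1+\fe\cos\theta)^{-1}d\theta$. Its spectrum is discrete and bounded below, namely $\beta_1\le\beta_2\le\cdots$. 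By the min–max principle applied to the quadratic form of $\mathcal A$, we get $m^-_\omega(\mathcal A(\beta,\fe))=\sharp\{j:\beta_j(\fe,\omega)<\beta\}$ and $\nu_\omega=\sharp\{j:\beta_j=\beta\}$. Together with $m^-_\omega=i_\omega$ and $\nu_\omega=\nu_\omega$ from \eqref{index equ}, this gives (a) and (b) at once: monotonicity, jumps only at the $\beta_j$, and the right-limit formula.

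For $i_\omega(\gamma_{-1,\fe})=0$ I would verify directly that $y_0=1+\fe\cos\theta$ satisfies $L_\fe y_0=-y_0$. Since $y_0>0$ and is $1$-periodic, $-1=\beta_1(\fe,1)$ is the bottom of the $1$-periodic spectrum. By the standard Floquet comparison, the bottom over all $\omega$ is attained at $\omega=1$, so $\beta_1(\fe,\omega)\ge -1$ and hence $m^-_\omega=0$ at $\beta=-1$. Alternatively, $\beta=-1$ means $D=0$, which is free motion in $z$ and has index $0$.

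For (c), at $\fe=0$ I would use Fourier modes $e^{i(k+\nu)\theta}$ with eigenvalues $(k+\nu)^2-1$, which gives the listed values. For $\fe>0$ I would apply Hill's oscillation theorem (Magnus–Winkler) to the equation $-y''-y=\beta(1+\fe\cos\theta)^{-1}y$ with positive periodic weight. This gives the interlacing pattern $1$-periodic / $\omega$ / $-1$-periodic / $-1$-periodic / $\omega$ / $1$-periodic, with strict inequalities for $\omega\ne\pm1$. The equalities $\beta_2(\fe,1)=\beta_3(\fe,1)=0$ come from the coexistence of $\cos\theta$ and $\sin\theta$ at $\beta=0$, and $\beta_1(\fe,1)=-1$ comes from $y_0$. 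The symmetry $\theta\mapsto-\theta$ gives $\beta_j(\fe,\omega)=\beta_j(\fe,\bar\omega)$.

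For (d), $L_\fe$ is an analytic family of type (A) in $\fe$, so Kato's theory gives real-analytic eigenvalue branches; the ordering in (c) prevents crossings for $\fe>0$. The first-order term is the perturbation $\fe\cos\theta(-d^2/d\theta^2-1)$, which couples Fourier mode $k+\nu$ only to $k+\nu\pm1$. Its matrix on an eigenspace of $L_0$ therefore vanishes unless that eigenspace contains two modes differing by $1$. This happens only for $e^{\pm i\theta/2}$ at $\omega=-1$, $\beta=-3/4$. There the $2\times2$ matrix is $-\frac34\cdot\frac12\begin{pmatrix}0&1\\1&0\end{pmatrix}$, whose eigenvalues give the derivatives $\mp3/8$.

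The main obstacle is (e), the limits as $\fe\to1$. There the weight $(1+\fe\cos\theta)^{-1}$ blows up at $\theta=\pi$ and the perturbation is singular. The limits $-1$ and $0$ for $j\le3$ are pinned by the explicit eigenvalues together with the interlacing. For $j\ge4$ I would rescale near $\theta=\pi$ and compare with the limiting singular problem. A more promising route is to pass back to the time variable, where $\fe\to1$ corresponds to a collision limit of the Kepler orbit, and identify $1/8$ as the threshold of the limiting equation. I expect this to require a careful two-sided Rayleigh-quotient estimate with cut-off test functions, as done in \cite{HOT23}.
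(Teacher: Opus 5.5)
The paper does not prove this theorem; it imports it from \cite{HLOYS23,HOT23}. So I judge your argument on its own. Parts (a), (b), the $\fe=0$ values, and the first-order computation in (d) are correct: weighted min--max gives $i_\omega(\gamma_{\beta,\fe})=\sharp\{j:\beta_j(\fe,\omega)<\beta\}$, $y_0=1+\fe\cos\theta$ is the positive periodic ground state, and the first-order matrix $\lambda P\cos\theta P$ is nonzero only on $\mathrm{span}\{e^{\pm i\theta/2}\}$. There are two genuine gaps, in (c) and in (e).

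\textbf{Gap in (c) for $0<\fe<1$.} Hill's oscillation theorem only gives $\beta_{2k}(\fe,1)\le\beta_{2k+1}(\fe,1)$ and $\beta_{2k-1}(\fe,-1)\le\beta_{2k}(\fe,-1)$. The statement asserts two non-generic facts beyond this:
\begin{itemize}
\item every periodic eigenvalue other than $-1$ is double for all $\fe$, e.g.\ $\beta_4(\fe,1)=\beta_5(\fe,1)$, not only the level $\beta=0$ you treat;
\item every antiperiodic eigenvalue is simple.
\end{itemize}
Neither follows from oscillation theory. One way to prove them is through Fourier coefficients.

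For the periodic case, $L_\fe e^{ik\theta}=(k^2-1)\bigl(e^{ik\theta}+\tfrac{\fe}{2}(e^{i(k-1)\theta}+e^{i(k+1)\theta})\bigr)$ vanishes at $k=\pm1$. Hence $L_\fe$ preserves $\overline{\mathrm{span}}\{e^{ik\theta}:k\ge1\}$ and its complex conjugate. The constant mode of $L_\fe y=\beta y$ gives $(1+\beta)c_0=0$. So for $\beta\ne-1$ each periodic eigenspace is $E_+\oplus\overline{E_+}$, of even dimension, hence of dimension $2$.

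For $\omega=-1$ the frequencies lie in $\mathbb Z+\frac12$, so $k^2-1\ne0$. Then $d_k=(k^2-1)c_k$ satisfies a genuine three-term recurrence with off-diagonal coefficient $\fe/2\ne0$. Its solutions decaying at $+\infty$ form a line, so antiperiodic eigenvalues are simple. This is the Ince-equation coexistence phenomenon of \cite{MW1966} mentioned in the introduction. Your analyticity argument in (d) rests on ``no crossings'', so it depends on these facts too.

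\textbf{Gap in (e).} This part is only a plan, and one claim in it is wrong. For $\omega\ne1$ the limits with $j\le3$ are not pinned by interlacing. Interlacing only gives $-1<\beta_1(\fe,\omega)\le\beta_1(\fe,-1)<\beta_2(\fe,-1)\le\beta_2(\fe,\omega)<0<\beta_3(\fe,\omega)\le\beta_3(\fe,-1)$. You still need $\beta_1(\fe,-1)\to-1$ and $\beta_2(\fe,-1),\beta_3(\fe,-1)\to0$.

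Together with interlacing, the real content of (e) is the following, for each fixed $\beta$ and $\fe$ close to $1$:
\begin{itemize}
\item $i_{-1}(\gamma_{\beta,\fe})=1$ on $(-1,0)$;
\item $i_{-1}(\gamma_{\beta,\fe})=3$ on $(0,1/8)$;
\item $i_\omega(\gamma_{\beta,\fe})\to\infty$ for $\beta>1/8$.
\end{itemize}
You have not identified where $1/8$ comes from. At $\fe=1$, near $\theta=\pi$ the equation is $y''+2\beta y/(\theta-\pi)^2\approx0$, which is oscillatory iff $\beta>1/8$. Equivalently, use the blow-up variables of Section 4, $q=(1+\fe\cos\theta)^{1/2}$ and $d\theta=q\,d\tau$. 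There the coefficient $f$ is $\approx\frac18-\beta$ near the rest points $(Q,q)=(\pm\sqrt2,0)$, and the $\tau$-time spent there is unbounded as $\fe\to1$.

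A proof must control both directions uniformly as $\fe\to1$:
\begin{itemize}
\item for $\beta>1/8$, Sturm comparison on that long interval makes the index unbounded;
\item for $\beta<1/8$, a matching bound must show that the passage through the collision region contributes only a fixed, exactly counted number of zeros.
\end{itemize}
This is what the blow-up/collision-index analysis (as in \cite{HO16}) supplies. Your sketch with cut-off test functions does not yet do it.
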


To be more intuitive, we provide the numerical figure of the degenerate curves $\{\Gamma_j(\pm 1)\}_{j\in \mathbb Z_+}$ as in Figure \ref{picture of Birfurcation}.
\begin{figure}[hbpt]
\centering
\includegraphics[width=0.8\textwidth]{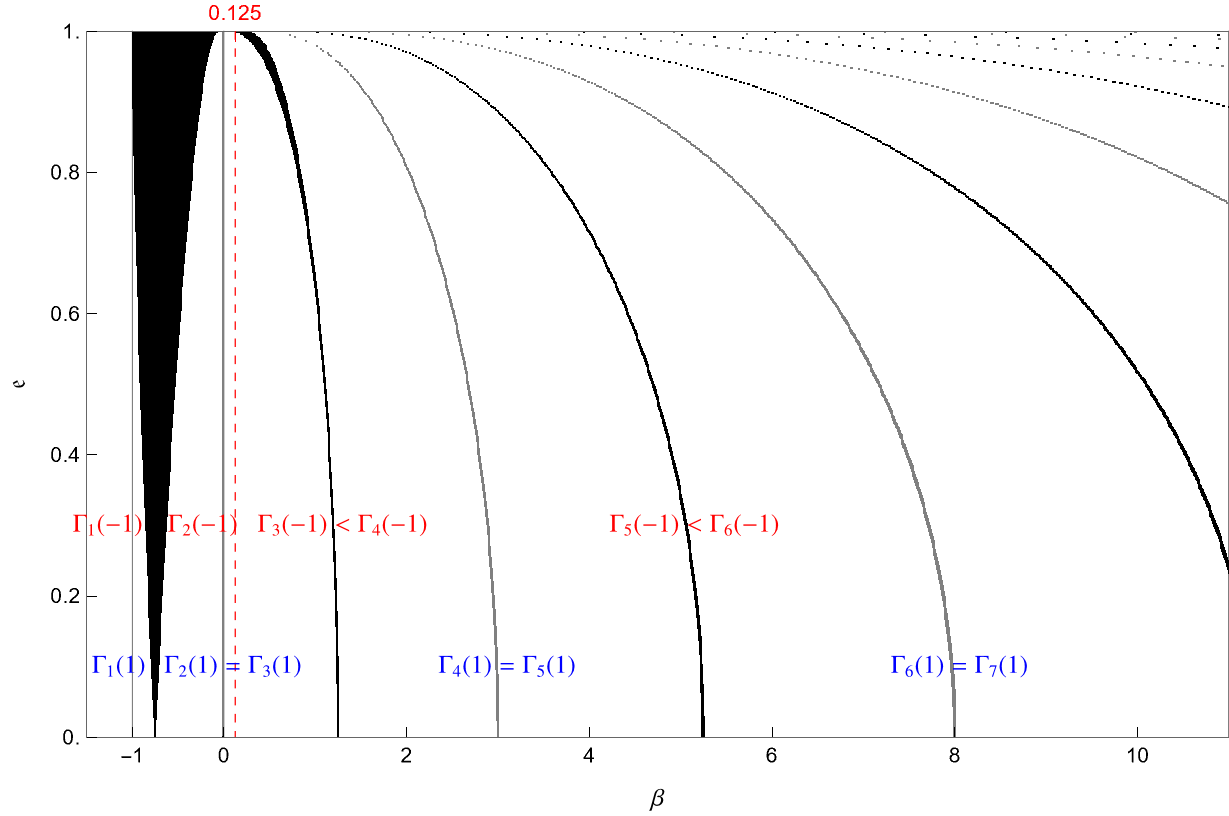}
\caption{The degenerate curves $\Gamma_{j}(1),\Gamma_{j}(-1),j\in \mathbb Z_+$ of the Hill stability equation.}
\label{picture of Birfurcation}
\end{figure}

When  $\beta\in(-1,0]$, the property of the Seifert ratation number has been studied in \cite{HOQ26} and summarised in Theorem \ref{thm: estimate of rotation number 2.2}. When  $\beta>0$, the authors of \cite{HLOQS26} investigated the location of degenerate curves and obtained the following theorem.

\begin{thm}[\cite{HLOQS26} Theorem 2.2 and 2.9]\label{thm: comparison mean index 3}
	For every $\fe\in (0,1),\beta\in(0,6/\fe+3],$ we have
	
$$\hat{\rho}_{p,\beta, \fe} > \hat{\rho}_{p,\beta, 0} =  \sqrt{1+\beta}.$$
\end{thm}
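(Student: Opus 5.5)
The plan is to recast the strict inequality as a statement about Morse indices of the operator $\mathcal A(\beta,\fe)$, and then to prove that index statement variationally by comparing with the circular case $\fe=0$.

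\textbf{Step 1: the circular case and the Bott-type reformulation.} At $\fe=0$ the equation \eqref{equ: Hill stability equation} reads $\ddot x+(1+\beta)x=0$. Its fundamental solution is a rotation with angular speed $\sqrt{1+\beta}$, so over one period $2\pi$ we get $\hat i(\gamma_{\beta,0})=2\sqrt{1+\beta}$ and hence $\hat\rho_{p,\beta,0}=\sqrt{1+\beta}$. For $\fe>0$ I will express $\hat\rho_{p,\beta,\fe}$ through the $\omega$-indices using Bott's iteration formula and the splitting numbers in Section \ref{sec: App}. Concretely, $\hat\rho_{p,\beta,\fe}>\sqrt{1+\beta}$ should follow once I show
$$i_{\omega_0}(\gamma_{\beta,\fe})\ \ge\ i_{\omega_0}(\gamma_{\beta,0})+\nu_{\omega_0}(\gamma_{\beta,0})$$
for the relevant $\omega_0=e^{2\pi i\nu_0}$, namely $\nu_0$ equal to the fractional part of $\pm\sqrt{1+\beta}$, and for the neighbouring $\omega$. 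Geometrically, every degenerate curve $\Gamma_j(\omega)$ with $\beta_j(0,\omega)\in(0,6/\fe+3]$ must bend to the left, i.e. $\beta_j(\fe,\omega)<\beta_j(0,\omega)$. By Theorem \ref{thm: prop of deg curve}(a)(b), this is equivalent to producing a subspace of $D(\omega,2\pi)$ of dimension $m^-_\omega(\mathcal A(\beta,0))+\nu_\omega(\mathcal A(\beta,0))$ on which the quadratic form of $\mathcal A(\beta,\fe)$ is negative definite.

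\textbf{Step 2: test subspace and diagonal gain.} For the test subspace I take $E=\mathrm{span}\{e^{i(m+\nu)\theta}: |m+\nu|\le\sqrt{1+\beta}\}$, the modes that are nonpositive for $\mathcal A(\beta,0)$. For $y=\sum c_\mu e^{i\mu\theta}\in E$ the form is
$$Q_\fe(y)=2\pi\sum_\mu(\mu^2-1)|c_\mu|^2-\beta\int_0^{2\pi}\frac{|y|^2}{1+\fe\cos\theta}\,d\theta .$$
To evaluate the integral I use the Fourier expansion of the weight,
$$\frac{1}{1+\fe\cos\theta}=\frac{1}{\sqrt{1-\fe^2}}\Bigl(1+2\sum_{n\ge1}q^n\cos n\theta\Bigr),\qquad q=\frac{\sqrt{1-\fe^2}-1}{\fe}\in(-1,0).$$
The diagonal part then contributes $2\pi\beta(1-\fe^2)^{-1/2}\sum|c_\mu|^2$, which is strictly larger than the $\fe=0$ value $2\pi\beta\sum|c_\mu|^2$. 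Since $\mu^2-1\le\beta$ on $E$, the diagonal part alone already makes $Q_\fe$ negative on $E$. This includes the null directions of $\mathcal A(\beta,0)$, which is exactly where the strict gain is needed.

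\textbf{Step 3: the main obstacle, the off-diagonal Toeplitz terms.} The remaining task, and the step I expect to be hardest, is to control the cross terms $\beta(1-\fe^2)^{-1/2}\sum_{\mu\ne\mu'}q^{|\mu-\mu'|}c_\mu\bar c_{\mu'}$. These can have either sign, and I must show they do not destroy the negativity won on the diagonal. My first attempt is the following. The modes in $E$ with $\mu^2-1$ well below $\beta$ have a large negative margin $\mu^2-1-\beta$, which I will use to absorb cross terms through Cauchy--Schwarz with geometric weights $|q|^n$. Only the top modes $|\mu|\approx\sqrt{1+\beta}$ have small margin, and there the diagonal gain $\beta\bigl((1-\fe^2)^{-1/2}-1\bigr)$ competes with cross terms of size about $\beta|q|$. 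Balancing these using $|q|\approx \fe/2$ and the elementary bound $(1-\fe^2)^{-1/2}-1\ge \fe^2/2$, together with the restriction on the number of modes coming from $\beta\le 6/\fe+3$, should give a quantitative inequality valid exactly in the stated range. If the crude Cauchy--Schwarz estimate is too lossy, my fallback is to perturb the test functions: replace $e^{i\mu\theta}$ by $e^{i\mu\theta}(1+a\fe\cos\theta)$, choosing $a$ to cancel the first-order cross terms. I will then also use the analyticity and $\partial_\fe\beta_j(0,\omega)=0$ from Theorem \ref{thm: prop of deg curve}(d) to handle small $\fe$ separately.
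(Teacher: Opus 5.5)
The paper does not prove this statement; it quotes it from \cite{HLOQS26}, remarking only that those authors located the degenerate curves. That is in the spirit of your Step 1.

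Your reduction is a legitimate sufficient condition, though it is stronger than the theorem: it needs every $\omega$-index to increase, i.e. every degenerate curve in the range to bend left. The genuine gap is in Step 3. In general $Q_\fe$ is \emph{not} negative definite on the unperturbed space $E$, so no weighting in Cauchy--Schwarz can absorb the Toeplitz terms.

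Here is a counterexample. Take $\omega=1$ and $\beta=15$, so that $E=\mathrm{span}\{e^{im\theta}:|m|\le 4\}$, and let $y=e^{4i\theta}+s\,e^{3i\theta}$ with $s>0$. With $\kappa=(1-\fe^2)^{-1/2}$ and your $q<0$,
\[
\frac{Q_\fe(y)}{2\pi}=15(1-\kappa)+(8-15\kappa)s^2+30\kappa|q|s .
\]
Its maximum over $s$ is $15(1-\kappa)+225\kappa^2q^2/(15\kappa-8)$. At $\fe=1/10$ this is about $+0.005$, attained near $s\approx 0.107$, even though $15<6/\fe+3=63$.

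More generally, pair the top mode $\mu=\sqrt{1+\beta}$ (taking $\omega=e^{2\pi i\mu}$) with the mode $\mu-1$. Then
\[
\max_s\frac{Q_\fe(y)}{2\pi}=\fe^2\beta\Bigl(\frac{\beta}{4(2\mu-1)}-\frac12\Bigr)+O(\fe^4),
\]
which is positive whenever $\beta>6+4\sqrt3$ and $\fe$ is small. This is exactly the competition you flagged, and it goes the wrong way:
\begin{itemize}
\item the diagonal gain is second order, about $\beta\fe^2/2$;
\item the cross term is first order, about $\beta\fe s$;
\item the lower neighbour's margin is only $2\mu-1\sim 2\sqrt\beta$, so after optimizing in $s$ the cross term wins.
\end{itemize}
The restriction $\beta\le 6/\fe+3$ gives nothing here, since it becomes vacuous as $\fe\to 0$.

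The curves do bend left, at least to second order, but a witnessing subspace must leave $E$. Perturb the pencil $(-d^2/d\theta^2-1)y=\beta(1+\fe\cos\theta)^{-1}y$ at $e^{i\mu\theta}$ with $\beta=\mu^2-1$. The first-order eigenfunction correction is $-\tfrac{\beta\fe}{2}\bigl(\tfrac{e^{i(\mu+1)\theta}}{2\mu+1}-\tfrac{e^{i(\mu-1)\theta}}{2\mu-1}\bigr)$. The eigenvalue shift is $-\tfrac{3(\mu^2-1)\mu^2}{2(4\mu^2-1)}\fe^2+O(\fe^3)<0$. The net sign is negative only because of the component on $e^{i(\mu+1)\theta}$, which lies outside $E$ and contributes $-\beta^2\fe^2/(4(2\mu+1))$.

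Your fallback (tilting $e^{i\mu\theta}$ by $1+a\fe\cos\theta$) points in this direction, but it remains a first-order ansatz. The stated range contains $\beta\fe$ of order one, where the ``correction'' is not small. It also involves unboundedly many curves as $\fe\to 0$. Analyticity and $\partial_\fe\beta_j(0,\omega)=0$ carry no sign information and are not uniform in $j$. So the central estimate is missing, not merely unpolished.

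A route that avoids $\omega$-by-$\omega$ test spaces is zero counting after the blow-up in $\tau$, as the paper does for Theorem \ref{thm: comparison mean index 4}. In that crude form, however, it only reaches $\beta>\beta_*(\fe)$.
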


Actually, the above inequality is also valid for $\beta\in[6/\fe+3,+\infty)$.  
\begin{thm}\label{thm: comparison mean index 4}
For every $\mathfrak{e}\in(0, 1)$ and $\beta\in(\beta_*(\fe),+\infty)$, we have $\hat \rho_{p,\beta,\fe}>\hat\rho_{p,\beta,0}=\sqrt{1+\beta}$.
where
\bea\label{Upsil}
\beta_*(\fe):=
\left\{
\begin{aligned}
&\frac{3\mathfrak{e}-4}{4}+\frac{3\mathfrak{e}\pi^2}{\mathcal{T}^2_{\mathfrak{e}}-4\pi^2}, & \hbox{\emph{if} $\fe\in(0,2/3]$,} \\
&\frac{1-3\sqrt{5}\sqrt{1-\mathfrak{e}^2}}{8}+
\frac{3\pi^2(3-\sqrt{5}\sqrt{1-\mathfrak{e}^2})}{2(\mathcal{T}^2_{\mathfrak{e}}-4\pi^2)}, & \hbox{\emph{if} $\fe\in[2/3,1)$,}
\end{aligned}
\right.
\eea
is continuous on $(0,1)$ and $\mathcal{T}_{\mathfrak{e}}=\int_{0}^{2\pi}(1+\mathfrak{e}\cos\theta)^{-\frac{1}{2}}d\theta$. Moreover, we have $\beta_*(\mathfrak e)<6/\fe+3$, see Figure~\ref{fig:Upsilon}.
\end{thm}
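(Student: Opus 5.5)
We need to prove Theorem \ref{thm: comparison mean index 4}: for $\beta>\beta_*(\fe)$, $\hat\rho_{p,\beta,\fe}>\sqrt{1+\beta}$. Equivalently, writing $\hat\rho_{p,\beta,0}=\sqrt{1+\beta}$, the claim is that the $\fe$-equation rotates strictly faster than the constant-coefficient equation $\ddot x+(1+\beta)x=0$.

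The plan is to estimate the mean index of $\gamma_{\beta,\fe}$ from below via Sturm/Morse comparison in the $\omega$-boundary setting, using Theorem \ref{thm: prop of deg curve}. Recall that $\hat i(\gamma)$ is determined by the $\omega$-indices: for $\omega=e^{2\pi i\nu}$, $i_\omega(\gamma_{\beta,\fe})$ counts the eigenvalues $\beta_j(\fe,\omega)<\beta$. Hence $\hat\rho>\sqrt{1+\beta}$ follows if, for the relevant $\nu$, the degenerate values satisfy $\beta_j(\fe,\omega)<\beta_j(0,\omega)$ in the range $\beta_j(0,\omega)\ge$ something. At $\fe=0$ we have $\beta_j(0,e^{2\pi i\nu})=(k-1+\nu)^2-1$, and these curves move strictly left.

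The key step is the eigenvalue bound. I would use a change of variables that removes the weight: the substitution $ds=(1+\fe\cos\theta)^{-1/2}d\theta$, whose total length is $\mathcal T_\fe$. For large $\beta$, a Liouville transformation turns
\[
\ddot x+\Bigl(1+\tfrac{\beta}{1+\fe\cos\theta}\Bigr)x=0
\]
into
\[
u_{ss}+\bigl(\beta+(1+\fe\cos\theta)+q_\fe(\theta)\bigr)u=0,
\]
where $q_\fe$ is an explicit Schwarzian-type correction built from $\fe\cos\theta$ and $\fe^2\sin^2\theta$ divided by powers of $1+\fe\cos\theta$. The mean rotation over one period is then compared, using a Rayleigh quotient with the trial functions $e^{2\pi i(k-1+\nu)s/\mathcal T_\fe}$, with the free rotation $\sqrt{1+\beta}\cdot 2\pi$. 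Concretely, the rotation is at least $\mathcal T_\fe\sqrt{\beta+\min(\text{averaged potential})}/(2\pi)$. Since $\mathcal T_\fe>2\pi$ by Jensen, the inequality
\[
\mathcal T_\fe^2\bigl(\beta+c(\fe)\bigr)>4\pi^2(1+\beta)
\]
solves to $\beta>\frac{4\pi^2-c(\fe)\mathcal T_\fe^2}{\mathcal T_\fe^2-4\pi^2}$. Matching this with the formula for $\beta_*$ identifies $c(\fe)$ as a lower bound of the potential term plus $q_\fe$. Two crude bounds of $q_\fe$ give the two cases: one uniform in $\fe\le2/3$, giving the $3\fe$ terms; and one obtained by optimizing where $1+\fe\cos\theta$ is small for $\fe\ge 2/3$, giving the $\sqrt5\sqrt{1-\fe^2}$ terms. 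I expect this lower bound on the transformed potential to be the main obstacle. I would first bound $q_\fe\ge -\frac{3\fe}{4}\cdot\frac{\ldots}{\ldots}$ pointwise, and then check that the resulting constants reproduce the stated expressions.

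Finally, I would verify that $\beta_*$ is continuous at $\fe=2/3$, where both formulas agree since $\sqrt5\sqrt{1-4/9}=\sqrt5\cdot\sqrt5/3=5/3$, and that $\beta_*<6/\fe+3$ using $\mathcal T_\fe^2-4\pi^2\ge c\fe^2$. Together with Theorem \ref{thm: comparison mean index 3}, this covers all $\beta>0$.
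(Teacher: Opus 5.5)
Your strategy coincides with the paper's. Your variable $s$ is the paper's blow-up time $\tau$ (with $d\theta=q\,d\tau$, $q=(1+\fe\cos\theta)^{1/2}$). Conjugating by $S(\tau)=\mathrm{diag}(q^{1/2},q^{-1/2})$ makes the entry $a_{21}$ a constant multiple of $(1+\fe\cos\theta)^{-1/4}x$, which is exactly your Liouville variable $u$. The paper then bounds the coefficient of the $a_{21}$-equation pointwise and counts zeros by Sturm comparison. Your Morse-index comparison is an equally valid substitute: the substitution maps the $\omega$-boundary domains bijectively onto each other and preserves the quadratic form, because the boundary term is periodic.

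The gap is that the step you postpone is the entire content of the proof, and the route you sketch for it (crude pointwise bounds on $q_{\fe}$ alone) does not reach the stated $\beta_*$. Explicitly,
\[
q_{\fe}=-\frac{\fe\cos\theta}{4}-\frac{3\fe^2\sin^2\theta}{16(1+\fe\cos\theta)}.
\]
With $p=1+\fe\cos\theta\in[1-\fe,1+\fe]$ and $\fe^2\sin^2\theta=\fe^2-(p-1)^2$, the full coefficient is
\[
\beta+p+q_{\fe}=\beta-\frac{1}{8}+\frac{15}{16}p+\frac{3(1-\fe^2)}{16p},
\]
which is the negative of the paper's $f(q^2)$. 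It must be minimized as one function of $p$. Bounding $p\ge 1-\fe$ and $q_{\fe}\ge q_{\fe}|_{\theta=0}=-\fe/4$ separately gives only $c(\fe)=1-5\fe/4$, and hence a strictly larger threshold than $\beta_*$. The joint minimum is:
\begin{itemize}
\item $1-3\fe/4$, attained at the endpoint $p=1-\fe$, when $\fe\le 2/3$;
\item $(3\sqrt{5}\sqrt{1-\fe^2}-1)/8$, attained at the interior critical point $p=\sqrt{(1-\fe^2)/5}$, when $\fe\ge 2/3$.
\end{itemize}
The case split is exactly whether this critical point lies in $[1-\fe,1+\fe]$. With this $c(\fe)$, the inequality $\mathcal{T}_{\fe}^2(\beta+c(\fe))>4\pi^2(1+\beta)$ rearranges precisely to $\beta>\beta_*(\fe)$.

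There are also some smaller corrections. First, the comparison must use the pointwise minimum $V_{\min}$ of this coefficient, not an average. On the span of several exponentials $e^{2\pi i(k+\nu)s/\mathcal{T}_{\fe}}$ the modulus $|u|^2$ is not constant, so only $V\ge V_{\min}$ makes the form negative on the whole span. That is what yields $\hat i\ge \mathcal{T}_{\fe}\sqrt{V_{\min}}/\pi$. Second, your opening assertion that the degenerate curves move strictly left is unproved and, without restriction, false. By Theorem~\ref{thm: prop of deg curve}, $\partial_{\fe}\beta_2(0,-1)=3/8>0$ and $\beta_2(\fe,1)=0$ for all $\fe$. The comparison argument does not need this claim. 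Third, for the final claim $\beta_*<6/\fe+3$, the paper's input is $\mathcal{T}_{\fe}\ge 4\pi/\sqrt{4-\fe^2}$, which follows from $\sqrt{1+\fe\cos\theta}\le 1+\frac12\fe\cos\theta$. This gives $\beta_*\le 3/\fe-1$ for $\fe\le 2/3$ and $\beta_*\le 7/2$ for $\fe\ge 2/3$.
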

\begin{figure}[hpbt]
\centering
\includegraphics[width=0.6\textwidth]{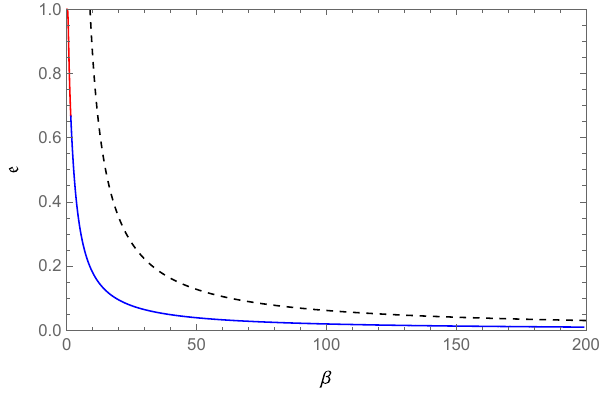}
\caption{The graphs of $\beta_*(\fe)$ (red for $2/3\leq \mathfrak e<1$, blue for $0<\mathfrak e\leq 2/3$) and $6/\mathfrak e+3$ (dashed)}
\label{fig:Upsilon}
\end{figure}
To prove this theorem, we need to use a blow-up technique, which was introduced in \cite{MSS06} and be used in \cite{HO16} to develop the collision index theory and study the linear stability of the elliptic relative equilibria near collisions. Let
$$
q=(1+\mathfrak{e}\cos\theta)^{\frac{1}{2}},\quad Q(\theta)=-2q'(\theta),\quad d\theta=q(\theta)d\tau.
$$
One can directly check that
\bea \label{equ: q,Q equation}
\frac{dQ}{d\tau}=\frac{1}{2}Q^{2}+q^{2}-1,\quad \frac{dq}{d\tau}=-\frac{1}{2}q Q,
\eea
and the following lemma hold. Notice that
\begin{equation}\label{equ: first integral E}
E(Q,q):=q^{2}(Q^{2}/2+q^{2}/2-1)=(\mathfrak{e}^{2}-1)/2,
\end{equation}
is a first integral of the system \eqref{equ: q,Q equation} and
$\mathcal{T}_{\mathfrak{e}}=\tau(2\pi)$ is the period of new time variable $\tau$.
\begin{lem}\label{lem: Perod}
Let $\mathcal{T}_{\mathfrak{e}}=\tau(2\pi)$ be the new time period above. The following assertions hold:
\begin{itemize}
\item[(i)] $\mathcal{T}_{0}=2\pi$ and $\lim_{\mathfrak{e}\rightarrow 1^+}\mathcal{T}_{\mathfrak{e}}=+\infty$.
\item[(ii)] $\frac{\partial\mathcal{T}_{\mathfrak{e}}}{\partial\mathfrak{e}}\big{|}_{\mathfrak{e}=0}=0$ and
$\frac{\partial\mathcal{T}_{\mathfrak{e}}}{\partial\mathfrak{e}}>0$, $\mathcal{T}_{\mathfrak{e}}>\frac{4\pi}{\sqrt{4-\fe^2}}$ hold for every $0<\mathfrak e<1$.
\end{itemize}
\end{lem}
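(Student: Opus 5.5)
We have to prove Lemma \ref{lem: Perod}: the new period $\mathcal T_{\mathfrak e}=\int_0^{2\pi}(1+\mathfrak e\cos\theta)^{-1/2}d\theta$ satisfies $\mathcal T_0=2\pi$, tends to $+\infty$ as $\mathfrak e\to1$, has zero derivative at $\mathfrak e=0$, is strictly increasing, and satisfies $\mathcal T_{\mathfrak e}>4\pi/\sqrt{4-\mathfrak e^2}$. The plan is to work only with this explicit integral, which follows from $d\tau=d\theta/q$ and $q=(1+\mathfrak e\cos\theta)^{1/2}$. The value $\mathcal T_0=2\pi$ is immediate.

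For the limit in (i), I would use Fatou's lemma, or monotone convergence along the increasing family established in (ii). As $\mathfrak e\to1$ the integrand tends to $(1+\cos\theta)^{-1/2}=(\sqrt2\,|\cos(\theta/2)|)^{-1}$. This function is not integrable near $\theta=\pi$, so $\mathcal T_{\mathfrak e}\to+\infty$.

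For (ii), the first step is to expand in powers of $\mathfrak e$. Using the binomial series $(1+x)^{-1/2}=\sum_k\binom{-1/2}{k}x^k$ and $\int_0^{2\pi}\cos^{2k}\theta\,d\theta=2\pi\binom{2k}{k}4^{-k}$, with the odd powers integrating to zero, I get
$$\mathcal T_{\mathfrak e}=2\pi\sum_{k\ge0}\binom{-1/2}{2k}\binom{2k}{k}4^{-k}\mathfrak e^{2k}.$$
This series converges for $|\mathfrak e|<1$. Every coefficient is positive, because $\binom{-1/2}{2k}=\binom{4k}{2k}16^{-k}>0$. Hence $\mathcal T_{\mathfrak e}$ is an even power series with positive coefficients. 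This gives $\partial_{\mathfrak e}\mathcal T|_{\mathfrak e=0}=0$ and $\partial_{\mathfrak e}\mathcal T>0$ for $0<\mathfrak e<1$.

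For the lower bound, I expand $4\pi/\sqrt{4-\mathfrak e^2}=2\pi(1-\mathfrak e^2/4)^{-1/2}=2\pi\sum_k\binom{2k}{k}16^{-k}\mathfrak e^{2k}$. I then compare the two series coefficient by coefficient. The $k$-th coefficient ratio is
$$\frac{\binom{4k}{2k}16^{-k}\binom{2k}{k}4^{-k}}{\binom{2k}{k}16^{-k}}=\binom{4k}{2k}4^{-k}.$$
This equals $1$ at $k=0$ and $3/2$ at $k=1$, and it grows to infinity. So the ratio is at least $1$ for all $k$ and strictly greater for $k\ge1$, which gives the strict inequality for $\mathfrak e>0$.

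The step I expect to be the main obstacle is verifying $\binom{4k}{2k}\ge4^k$ cleanly. I would prove it by induction. The ratio of consecutive terms is
$$\frac{\binom{4k+4}{2k+2}}{\binom{4k}{2k}}=\frac{(4k+1)(4k+2)(4k+3)(4k+4)}{(2k+1)^2(2k+2)^2}=\frac{4(4k+1)(4k+3)}{(2k+1)(2k+2)}.$$
This is at least $4$, since $(4k+1)(4k+3)\ge(2k+1)(2k+2)$. Alternatively, I could avoid series altogether: apply Jensen's inequality with the convex function $x\mapsto x^{-1/2}$ and the average over $\theta$, and symmetrize over $\theta$ and $\theta+\pi$. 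I would keep this as a fallback only, since it seems to give a weaker bound than $4\pi/\sqrt{4-\mathfrak e^2}$.
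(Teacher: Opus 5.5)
Your proposal is correct, and it takes a genuinely different route from the paper. The paper handles the claims separately. For the limit it writes $\mathcal{T}_{\mathfrak e}=\frac{4}{\sqrt{1+\mathfrak e}}K\big(\sqrt{2\mathfrak e/(1+\mathfrak e)}\big)$ and uses the divergence of the complete elliptic integral $K(m)$ as $m\to1$. For monotonicity it differentiates under the integral and folds $[0,2\pi]$ onto $[-\pi/2,\pi/2]$ via $\theta\mapsto\theta+\pi$. This gives $\partial_{\mathfrak e}\mathcal T_{\mathfrak e}=\frac12\int_{-\pi/2}^{\pi/2}\cos\theta\,[(1-\mathfrak e\cos\theta)^{-3/2}-(1+\mathfrak e\cos\theta)^{-3/2}]\,d\theta$, which vanishes at $\mathfrak e=0$ and is positive for $\mathfrak e>0$. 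For the lower bound it uses the pointwise tangent-line inequality $\sqrt{1+\mathfrak e\cos\theta}\le 1+\frac12\mathfrak e\cos\theta$, i.e. $(1+\mathfrak e\cos\theta)^{-1/2}\ge(1+\frac12\mathfrak e\cos\theta)^{-1}$. It then applies the classical integral $\int_0^{2\pi}(1+a\cos\theta)^{-1}d\theta=2\pi/\sqrt{1-a^2}$ with $a=\mathfrak e/2$. That is the elementary argument your fallback was reaching for: concavity of the square root used pointwise, then exact integration, rather than Jensen on the average (which, as you suspected, only yields $\mathcal T_{\mathfrak e}\ge2\pi$).

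Your single even series with coefficients $2\pi\binom{4k}{2k}\binom{2k}{k}64^{-k}$ settles both parts of (ii) at once. Through the comparison $\binom{4k}{2k}\ge4^k$ (your induction is correct, and strict for $k\ge1$), it shows that $\mathcal T_{\mathfrak e}-4\pi/\sqrt{4-\mathfrak e^2}$ is itself a positive-coefficient series beginning with $\frac{\pi}{8}\mathfrak e^2$. So you obtain the strict inequality in the statement directly. The paper writes only $\ge$; strictness there follows because the tangent-line bound is strict wherever $\cos\theta\neq0$. The cost of your route is more computation than the paper's one-line bound.

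One small correction in (i): the integrand $(1+\mathfrak e\cos\theta)^{-1/2}$ decreases in $\mathfrak e$ where $\cos\theta>0$, so monotone convergence does not apply on all of $[0,2\pi]$. Fatou's lemma, or monotone convergence restricted to $\{\cos\theta\le0\}$, is the right justification, and it suffices.
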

\begin{proof}
If $\mathfrak e=0$, we directly obtain $\mathcal{T}_{0}=2\pi$. We compute
$$
\mathcal{T}_{\mathfrak{e}}=\int_{0}^{2\pi}(1+\mathfrak{e}\cos\theta)^{-\frac{1}{2}}d\theta
=\int_{0}^{2\pi}\frac{1}{\sqrt{1+\mathfrak{e}(1-2\sin^2\frac{\theta}{2})}}d\theta
=\frac{4}{\sqrt{1+\mathfrak{e}}}K(\sqrt{\frac{2\mathfrak{e}}{1+\mathfrak{e}}}).\nonumber
$$
where $K(m):=\int_{0}^{\pi/2}(1-m^2\sin^2\theta)^{-\frac{1}{2}}d\theta,m\in[0,1)$, is the complete elliptic integral of the first kind, which satisfies $\lim_{m\rightarrow 1^+}K(m)=+\infty$. Then we obtain $\lim_{\mathfrak{e}\rightarrow 1^+}\mathcal{T}_{\mathfrak{e}}=+\infty$. Hence, (i) holds.

The derivative of $\mathcal T_{\mathfrak e}$ in $\mathfrak e$ can be computed as
$$
\frac{\partial\mathcal{T}_{\mathfrak{e}}}{\partial\mathfrak{e}}=-\int_{0}^{2\pi}\frac{\cos\theta}{2(1+\mathfrak{e}\cos\theta)^{\frac{3}{2}}}d\theta
=-\int_{-\frac{\pi}{2}}^{\frac{\pi}{2}}\frac{\cos\theta}{2(1+\mathfrak{e}\cos\theta)^{\frac{3}{2}}}d\theta
+\int_{-\frac{\pi}{2}}^{\frac{\pi}{2}}\frac{\cos\theta}{2(1-\mathfrak{e}\cos\theta)^{\frac{3}{2}}}d\theta.
$$
We see that $\frac{\partial\mathcal{T}_{\mathfrak{e}}}{\partial\mathfrak{e}}\big{|}_{\mathfrak{e}=0}=0$ and $\frac{\partial\mathcal{T}_{\mathfrak{e}}}{\partial\mathfrak{e}}>0$ for every $\mathfrak{e}\in(0,1)$. Moreover, since $\sqrt{1+\fe\cos\theta}\leq 1+\frac{1}{2}\fe\cos\theta$, we obtain $\mathcal{T}_{\mathfrak{e}}\geq\int_{0}^{2\pi}(1+\frac{1}{2}\mathfrak{e}\cos\theta)^{-1}d\theta=\frac{4\pi}{\sqrt{4-\fe^2}}$.
This completes the proof.
\end{proof}
Recall $\gamma_{\beta,\mathfrak e}$ the fundamental solution of equation \eqref{equ: linearized equation rp} and
$\mathcal{T}_{\mathfrak{e}}=\int_{0}^{2\pi}(1+\mathfrak{e}\cos\theta)^{-\frac{1}{2}}d\theta=\tau(2\pi)$. Let $S(\tau):=\mathrm{diag}(q^{\frac{1}{2}}(\tau),q^{-\frac{1}{2}}(\tau))\in \Sp(2)$ and $\hat{\gamma}_{\beta,\mathfrak{e}}(\tau):=S(\tau)\gamma_{\beta,\mathfrak{e}}(\tau)S^{-1}(0)$. Then $S(\mathcal{T}_\mathfrak{e})=S(0)$ and we compute that $\hat{\ga}_{\beta,\mathfrak{e}}$ satisfies
\begin{equation}\label{equ: blow up equation}
\frac{d\hat{\gamma}_{\beta,\mathfrak{e}}(\tau)}{d\tau}=J_{2}\hat{B}(\tau)\hat\gamma_{\beta,\mathfrak{e}}(\tau),\quad \hat{\gamma}_{\beta,\mathfrak{e}}(0)=I_2, \quad \text{with}\quad
\hat{B}(\tau)=\begin{pmatrix}
  1 & \frac{1}{4}Q \\
  \frac{1}{4}Q & q^{2}+\beta
\end{pmatrix}.
\end{equation}
Since the symplectic loop $S|_{\tau\in[0,\mathcal T_{\mathfrak e}]}$ is contractible in $\Sp(2)$, the homotopy invariance of the mean index shows that for every $0\leq \mathfrak{e}<1$, $1+\beta>0$ and $\omega\in \mathbf{U}$, we have $\hat{\gamma}_{\beta,\mathfrak{e}}(\mathcal{T}_{\mathfrak{e}}) \approx\gamma_{\beta,\mathfrak{e}}(2\pi)$ and
\bea\label{equ: mean index of hat gamma}
i_{\omega}(\hat{\gamma}_{\beta,\mathfrak{e}})=i_{\omega}(\gamma_{\beta,\mathfrak{e}}),\quad \hat{i}(\hat{\gamma}_{\beta,\mathfrak{e}})=\hat{i}(\gamma_{\beta,\mathfrak{e}}),
\quad \nu_{\omega}(\hat{\gamma}_{\beta,\mathfrak{e}})=\nu_{\omega}(\gamma_{\beta,\mathfrak{e}}).
\eea

Now we are ready to prove Theorem \ref{thm: comparison mean index 4}.
\begin{proof}[Proof of Theorem \ref{thm: comparison mean index 4}]
Let $\hat{\gamma}_{\beta,\mathfrak{e}}(\tau):=\big(a_{ij}(\tau)\big),i,j=1,2$ and $\Lambda_{D}:=\mathbb{R}\oplus\{0\}$. By \eqref{mean index2}, we have
\bea\label{mean-index-esti1}
\hat{i}(\hat{\gamma}_{\beta,\mathfrak{e}})=\lim_{k\to +\infty}\frac{1}{k}\sum_{0<\tau\leq k\mathcal T_{\mathfrak e}} \dim \hat{\gamma}_{\beta,\mathfrak{e}}\left(\tau\right) \Lambda_{D} \cap \Lambda_{D}
=\lim_{k\to +\infty}\frac{1}{k}\cdot\sharp\{\tau \in [0, k\mathcal T_{\mathfrak e}]: a_{21}(\tau)=0\}.
\eea
By equations \eqref{equ: blow up equation} and \eqref{equ: q,Q equation}, we compute that $a_{21}$ satisfies
\begin{equation}\label{equ: ddot a21}
\frac{d^2a_{21}}{d\tau^2}=\left[\frac{3}{16}Q^{2}(\tau)-\frac{3}{4}q^{2}(\tau)-\frac{1}{4}-\beta\right]a_{21}(\tau),\quad
\frac{da_{21}}{d\tau}\bigg|_{\tau=0}=1,\quad
a_{21}(0)=0.
\end{equation}
Using $q^2=1+\mathfrak{e}\cos\theta$ and $2E=q^2(Q^{2}+q^2-2)=\mathfrak{e}^2-1$, see \eqref{equ: first integral E}, we obtain
$$
f(q^2):=\frac{3}{16}Q^{2}-\frac{3}{4}q^{2}-\frac{1}{4}-\beta=\frac{3(\mathfrak{e}^2-1)}{16q^2}-\frac{15}{16}q^2+\frac{1}{8}-\beta,\quad\forall q^2\in[1-\mathfrak{e},1+\mathfrak{e}]\subset [0,+\infty),
$$
which increases on $(0,(\frac{1-\mathfrak{e}^2}{5})^{\frac{1}{2}})$ and decreases on $((\frac{1-\mathfrak{e}^2}{5})^{\frac{1}{2}},+\infty)$. To establish the inequality $\hat \rho_{p,\beta,\fe}>\hat\rho_{p,\beta,0}$ for every $0<\mathfrak e<1$ and $\beta>\beta_*(\mathfrak e)$, we would split in two cases: $2/3\leq\mathfrak{e}<1$ and $0<\mathfrak{e}<2/3$.

Assume $2/3\leq\mathfrak{e}<1$. Then $(\frac{1-\mathfrak{e}^2}{5})^{\frac{1}{2}}\in[1-\fe,1+\fe]$ and $f(q^2)\leq f((\frac{1-\mathfrak{e}^2}{5})^{\frac{1}{2}}) = \frac{1}{8}(-3\sqrt{5}\sqrt{1-\mathfrak{e}^2}+1)-\beta$. For every $\beta\geq\frac{1}{8}(-3\sqrt{5}\sqrt{1-\mathfrak{e}^2}+1)$, the comparison theorem for \eqref{equ: ddot a21} implies that
\bea\label{esim4}
\begin{aligned}
\sharp\left\{\tau \in [0, \tau_{0}]: a_{21}(\tau)=0\right\}&\geq\sharp\{\tau \in [0, \tau_{0}]: \tilde a_{21}(\tau)=0\} \\ &=\left\lfloor\frac{\tau_{0}}{\pi}\left(\frac{1}{8}\left(3\sqrt{5}\sqrt{1-\mathfrak{e}^2}-1\right)+\beta\right)^{\frac{1}{2}}\right\rfloor+1,
\end{aligned}\eea
where $\tilde a_{21}$ solves equation
$$
\frac{d^2\tilde a_{21}}{d\tau^2}=-\left(\frac{1}{8}\left(3\sqrt{5}\sqrt{1-\mathfrak{e}^2}-1\right)+\beta\right)a_{21}(\tau),\quad
\frac{d\tilde a_{21}}{d\tau}\bigg|_{\tau=0}=1,\quad \tilde a_{21}(0)=0.
$$
Then using \eqref{equ: mean index of hat gamma}, \eqref{mean-index-esti1} and \eqref{esim4}, we conclude that
$$
\hat{i}(\gamma_{\beta,\mathfrak{e}})=\hat{i}(\hat{\gamma}_{\beta,\mathfrak{e}})
\geq\frac{\mathcal{T}_{\mathfrak{e}}}{\pi}\left(\frac{1}{8}\left(3\sqrt{5}\sqrt{1-\mathfrak{e}^2}-1\right)+\beta\right)^{\frac{1}{2}}
>2\sqrt{1+\beta},
$$
for every
$$\beta>\beta_*(\fe):=\frac{1-3\sqrt{5}\sqrt{1-\mathfrak{e}^2}}{8} +\frac{3\pi^2(3-\sqrt{5}\sqrt{1-\mathfrak{e}^2})}{2(\mathcal{T}^2_{\mathfrak{e}}-4\pi^2)}.$$
Hence, we obtain $\hat\rho_{p,\beta,\mathfrak{e}}>\hat \rho_{p,\beta,0}=\sqrt{1+\beta}$ for every $\beta>\beta_*(\mathfrak e)$. Moreover, by Lemma \ref{lem: Perod}-(ii), we obtain $\mathcal{T}_{\mathfrak{e}}^2-4\pi^2\geq4\pi^2\fe^2/(4-\fe^2)$ and
$$\begin{aligned}
\beta_*(\fe)&\leq\frac{1}{8}(1-3\sqrt{5}\sqrt{1-\mathfrak{e}^2})+\frac{3}{2}(3-\sqrt{5}\sqrt{1-\mathfrak{e}^2})(\frac{1}{\fe^2}-\frac{1}{4})\\
&=-1+\frac{3}{2\fe^2}(3-\sqrt{5}\sqrt{1-\fe^2})\leq7/2<6/\fe+3,\quad \forall \mathfrak e\in[2/3,1).
\end{aligned}$$
Therefore, Theorem \ref{thm: comparison mean index 4} is proved for every $2/3\leq \mathfrak e<1$.

Assume $0\leq \fe<2/3$. Then $1-\fe>(\frac{1-\mathfrak{e}^2}{5})^{\frac{1}{2}}$ and $f(q^2)\leq f(1-\mathfrak e)= 3\mathfrak e/4-1-\beta<0$. Therefore, for every $\beta\geq 3\mathfrak{e}/4-1$, the relation \eqref{equ: mean index of hat gamma} and the comparison theorem for \eqref{equ: ddot a21} imply that
$$
\hat{i}(\gamma_{\beta,\mathfrak{e}})=\hat{i}(\hat{\gamma}_{\beta,\mathfrak{e}})
\geq\frac{\mathcal{T}_{\mathfrak{e}}}{\pi}\left(-\frac{3\mathfrak{e}}{4}+1+\beta\right)^{\frac{1}{2}}\geq 2\sqrt{1+\beta},
$$
for every
$$\beta>\beta_*(\fe):=\frac{3\mathfrak{e}-4}{4}+\frac{3\pi^2\mathfrak{e}}{\mathcal{T}^2_{\mathfrak{e}}-4\pi^2}.$$
Hence, we have $\hat \rho_{p,\beta,\mathfrak{e}}\geq \hat \rho_{p,\beta,0}=\sqrt{1+\beta}$ for every $\beta>\beta_*(\mathfrak e)$.
Moreover, since $\mathcal{T}_{\mathfrak{e}}^2-4\pi^2\geq4\pi^2\fe^2/(4-\fe^2)$, we have
$\beta_*(\fe)\leq 3/\fe-1<6/\fe+3.$ This proves Theorem \ref{thm: comparison mean index 4} for every $0 <\mathfrak e<2/3$. Hence, the proof is now complete.
\end{proof}
\subsection{Proof of Theorem \ref{thm: estimate of rotation number}}

Recall that $\beta=D/C-1$, $D=\sum_{i=1}^{\mathfrak{n}}A_iB_i>0$ and $C=\sum_{i=0}^{\mathfrak{n}}A_i>0$. The condition $D\geq C$ implies $\beta\geq0$. When $A_0=0$ and $B_i=1,\,i=1,\cdots,\mathfrak{n}$, the system reduces to the reduced spatial Kepler problem and the result has been proved in \cite{HOQ26}. Then we assume $A_0\neq0$ or $B_k\neq1$ for some $k\in\{1,\cdots,\mathfrak{n}\}$. If $\beta=0$, by Theorem \ref{thm: prop of deg curve}, we obtain $\hat \rho_p=1$. When $A_0=0$, we can derive that there exist $i\neq j$, such that $B_i\neq B_j$, by Proposition \ref{prop, vol1 vol2}, we know $\mathrm{vol} (\mathfrak{M},\lambda)>\mathrm{vol}(\hat{\mathfrak{M}}_0,\hat{\lambda}_0)$. When $A_0>0$, by Proposition \ref{prop, vol1 vol2}, we obtain $\mathrm{vol} (\mathfrak{M},\lambda)>\mathrm{vol}(\hat{\mathfrak{M}}_0,\hat{\lambda}_0)$ again. Combined with Theorem \ref{thm: vol of M}, we obtain $$\hat \rho_{p}=1>\mathrm{vol}(\hat{\mathfrak{M}}_0,\hat{\lambda}_0)/\mathrm{vol} (\mathfrak{M},\lambda)=T_p^2/\mathrm{vol}(\mathfrak{M},\lambda).$$

Then we assume $\beta>0$. Combining Theorems \ref{thm: comparison mean index 3} and \ref{thm: comparison mean index 4}, we obtain the inequality $\hat \rho_{p,\beta,\mathfrak e}>\hat \rho_{p,\beta,0}=\sqrt{1+\beta}$ for every $\beta\in (0,+\infty)$ and $0<\mathfrak e<1$. By Proposition \ref{prop, vol1 vol2} and Theorem \ref{thm: vol of M}, we have $\mathrm{vol}(\mathfrak{M},\lambda)\geq\mathrm{vol}(\hat{\mathfrak{M}}_0,\hat{\lambda}_0)=T_p^2/\sqrt{1+\beta}$. So we obtain
$$\hat \rho_{p}>\sqrt{1+\beta}\geq T_p^2/\mathrm{vol}(\mathfrak{M},\lambda).
$$
By Corollary \ref{cor: infini peri orbit1}, the energy surface $\mathfrak M$ admits infinitely many periodic orbits, the proof of Theorem \ref{thm: estimate of rotation number} is complete.

\subsection{Proof of Corollary  \ref{coro: estimate of rotation number 1}}
Let $\mathfrak{n}=n+1\geq3$, $A_0=a(n)$, $A_i=b_{n,i}/4,\,i=1,\cdots,n$, $A_{\mathfrak{n}}=m_0$ and $B_i=b_{n,i}^2,\,i=1,\cdots,n$, $B_{\mathfrak{n}}=1$. The Hamiltonian \eqref{equ: reduced Ham} becomes Hamiltonian \eqref{equ: hiphop ham}. Let $c(n,m_0)=a(n)+b(n)+m_0$ and $d(n,m_0)=\frac{1}{4}\sum_{k=1}^{n}b_{n,k}^3+m_0$. So we only need to verify $d(n,m_0)>c(n,m_0)$.
Direct computation shows that
$$\begin{aligned}
	\frac{d(n,0)}{c(n,0)}=&\frac{\sum_{k=1}^{n}\csc^{3} ((2k-1)\pi/2n)}{\sum_{k=1}^{2n-1}\csc (k\pi/2n)}>\frac{2\csc^{3}(\pi/2n)}{1+(2n-2)\csc(\pi/2n)}\\
	=&2(\sin^{3}(\pi/2n)+(2n-2)\sin^{2}(\pi/2n))^{-1}>8n^{3}(\pi^{3}/2+2n(n-1)\pi^{2})^{-1}.
\end{aligned}$$
Hence $\lim_{n\rightarrow+\infty}d(n,0)/c(n,0)=+\infty.$ Moreover, let $f(x)=8x^{3}(\pi^{3}/2+2x(x-1)\pi^{2})^{-1}$, $f(x)$ is strictly increasing for $x\geq2$. Since $f(2)=64(\pi^{3}/2+4\pi^{2})^{-1}>1,$ we have $d(n,0)>c(n,0)$ for all $n\geq2$. So $d(n,m_0)>c(n,m_0)$ for all $m_0\geq0$.
\begin{rem}
	For fixed $n$, $\beta(n,m_0)=d(n,m_0)/c(n,m_0)-1$ is strictly decreasing with respect to $m_0$. So we conclude that:
	\begin{equation*}
		0<=\beta(n,m_0)\leq\frac{d(n,0)}{c(n,0)}-1,
	\end{equation*}
	and
	$$\lim_{n\rightarrow+\infty}\beta(n,m_{0})=+\infty,\ \ \lim_{m_{0}\rightarrow+\infty}\beta(n,m_{0})=0.$$
	For a compact energy surface, we have $0<\mathfrak{e}<(1-a(n)^{2}/c(n,m_0)^{2})^{\frac{1}{2}}$. Since $\beta(n,m_0)=d(n,m_0)/c(n,m_0)-1$, so it can be expressed as $0<\mathfrak{e}<(1-a(n)^{2}\beta(n,m_0)^{2}/(d(n,0)-c(n,0))^{2})^{\frac{1}{2}}$. For a fixed $n$, when we consider the $(\beta,\mathfrak{e})-$parameter curve, the range corresponding to a compact energy surface is exactly a portion of a quarter-ellipse, see Figure \ref{fig:Upsilon 1}.
\end{rem}
\begin{figure}[hpbt]
\centering
\includegraphics[width=0.55\textwidth]{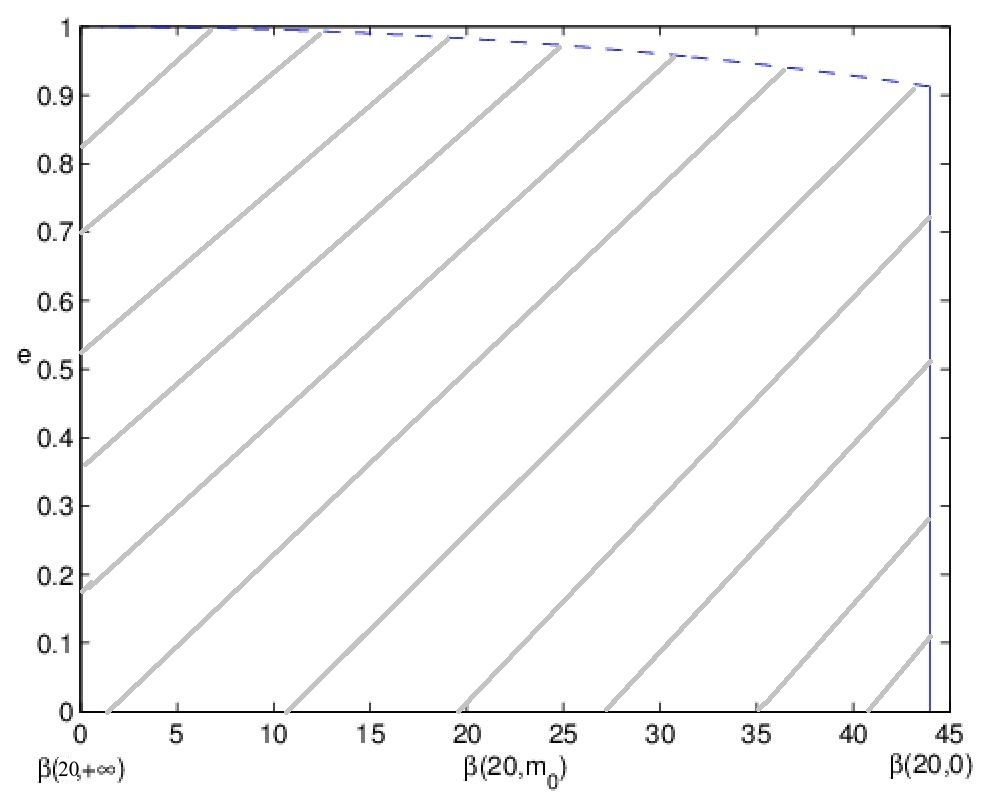}
\caption{When
n=20
, the parameter region that ensures the energy surface is compact.}
\label{fig:Upsilon 1}
\end{figure}

\subsection{Proof of Corollary \ref{coro: estimate of rotation number 2}}

Let $\mathfrak{n}=1$, $A_0=a(n)\alpha$, $A_1=1$ and $B_1=1+n\alpha$, The Hamiltonian \eqref{equ: reduced Ham} coincide with Hamiltonian \eqref{equ: reduced Ham 2}. Since $2\leq n\leq472$ is equalent to $n>a(n)$, which satisfies the condition of Theorem \ref{thm: estimate of rotation number}.

\appendix
\section{The Maslov-type index theory}\label{sec: App}
In this section, we briefly introduce the Maslov-type index, see \cite{Lon02} for more details. Let $\Sp(2n)$ denote the set of $2n\times2n$ real symplectic matrix. Denote $\mathcal{P}_{\tau}(2n):=\left\{\gamma \in C([0,\tau], \Sp(2n)): \gamma(0)=I_{2 n}\right\}$. For every $\omega\in\mathbf{U}$, we define the $\omega$-degenerate hypersurface in $\Sp(2n)$ as
$$
\Sp(2n)_\omega^0:=\left\{M\in \Sp(2n): \det(M-\omega I_{2n})=0 \right\},
$$
on which there exists a nowhere vanishing vector field $V(M):=\frac{d}{dt}Me^{tJ_{2n}}|_{t=0},\forall M\in \Sp(2n)_\omega^0$ that is everywhere transverse to $\Sp(2n)_\omega^0$ and determines a positive co-orientation of $\Sp(2n)_\omega^0$. Denote the $\omega$-regular set in $\Sp(2n)$ as $\Sp(2n)^*_{\omega}:=\Sp(2n)\setminus \Sp(2n)_\omega^0$. We define the $\omega$-index, $\omega$-nullity and the mean index of a symplectic path as follows.
\begin{defi}\label{def:Maslov-type index}
Let $\gamma\in \mathcal{P}_{\tau}$. For every $\omega\in \mathbf{U}$, we define the $\omega$-index and $\omega$-nullity of $\gamma$ as
$$
i_\omega(\gamma):=\begin{cases}(e^{-\epsilon J_{2n}}\gamma)\cdot \Sp(2n)_\omega^0-n, & \text { if } \omega=1 \\
(e^{-\epsilon J_{2n}}\gamma)\cdot \Sp(2n)_\omega^0, & \text { if } \omega\neq1.\end{cases} \quad
\nu_\omega(\gamma):=\dim_{\mathbb{C}}\ker_{\mathbb{C}}(\gamma(\tau)-\omega I_{2n}),
$$
where $(e^{-\epsilon J_{2n}}\gamma)\cdot \Sp(2n)_\omega^0$ denote the algebraic intersection number and $\epsilon>0$ is sufficiently small. Define the mean index of $\gamma$ as $\hat{i}(\gamma):=\lim_{m\rightarrow+\infty}i_{1}(\gamma^m)/m$, where $\gamma^m:[0,k\tau]\rightarrow \Sp(2n)$ denote the $m$-th iteration of $\gamma$ for every $m\in  \mathbb Z_+$, which is defined as
$$
\gamma^m(t):=\gamma(t-j\tau)\gamma(\tau)^j,\quad\forall t\in[j\tau,(j+1)\tau],\quad j=0,\cdots,m-1.
$$
\end{defi}
Note that if $\omega=1$, the $1$-index coincides with the Conley-Zehnder index. In $\Sp(2)$, there is another way of understanding to the $1$-index. Let $\eta(t)\in C([0,\tau],\mathbb R)$ be the argument of $\gamma(t)v$ for any $v\in \mathbf{U}$. Let $\Delta(v):=\frac{1}{2\pi}(\eta(\tau) - \eta(0))$ be the variation of $\eta$ on $[0,\tau]$. Define $I_\gamma:= \{\Delta(v): v \in \mathbf{U}\} \subset \mathbb{R}$, which is a closed interval of length $<1/2$. For every $\epsilon>0$ sufficiently small, we define $i_1(\gamma):=2k+1$ if $I_\gamma-\epsilon\in (k,k+1)$ and define $i_1(\gamma): = 2k$ if $k\in I_\gamma-\epsilon$. The rotation number of $\gamma$ is defined as
\begin{equation}\label{def: mean index and rotation number}
\rho(\gamma) :=\frac{\hat i(\gamma)}{2}= \lim_{k\to \infty}\frac{\eta(k\tau)}{2\pi k}.
\end{equation}

Let $\Lambda_D:=\mathbb{R}\oplus\{0\}$ be a Lagrangian subspace of $\rr^2$. Assume the path $\gamma\in \mathcal P_\tau(2n)$ is differentiable and satisfies $-J_{2n}\dot \gamma(t)\gamma(t)^{-1}|_{\Lambda_D}>0$ for every $t\in \mathbb R$. As the Morse index theorem,
the mean index can be computed as
\begin{eqnarray}\label{mean index2}
\hat{i}(\gamma)=\lim_{k\to +\infty}\frac{1}{k}\sum_{0<t\leq k\tau} \operatorname{dim} \gamma(t) \Lambda_{D} \cap \Lambda_{D}.
\end{eqnarray}


The $\omega$-index also relates to the Morse index of a certain self-adjoint operator. Consider a second order system $\ddot{x}=\mathcal D(t)x$, $t\in[0,\tau]$. Let $\mathcal{A}:=-\frac{d^2}{dt^2}+\mathcal D$ be a self-adjoint operator on $L^2([0,\tau],\mathbb{C}^n)$ with domain
$$
D(\omega,\tau):=\left\{W^{2,2}([0,\tau], \mathbb{C}^n): y(\tau)=\omega y(0), \dot{y}(\tau)=\omega \dot{y}(0)\right\}.
$$
Let $m^-_\omega(\mathcal{A})$ denote the Morse index of $\mathcal A$, which is the total multiplicity of the negative eigenvalues of $\mathcal A$. Let $\nu_{\omega}(A):=\dim\ker(A)$ denote the nullity of $\mathcal{A}$. From Theorem 7.3.4 in \cite{Lon02}, we have the following relations
\begin{equation}\label{index equ}
m^-_{\omega}(\mathcal{A})=i_{\omega}(\gamma), \quad \nu_{\omega}(\mathcal{A})=\nu_{\omega}(\gamma),\quad \forall \omega\in \mathbf U,
\end{equation}
where $\gamma\in \mathcal P_\tau(2n)$ is the fundamental solution of the linear system
$\dot{\gamma}=J_{2n} \mathrm{diag}(I_n,-D(t))\gamma$. For more general boundary conditions, we refer to \cite{HWY} for a similar result.

\hfill\newline
\noindent{\bf Acknowledgement.}
X.Hu, Z.Qiao and Y.Yang are partially supported by the National Natural Science Foundation of China (No.12521001) and Taishan Scholars Climbing Program of Shandong(No.TSPD20240802). Y.Ou is partially supported by the National Natural Science Foundation of China (No.12371192), the Young Taishan Scholars Program of Shandong Province (No.tsqn202312055), and the Qilu Young Scholar Program of Shandong University.
\bibliographystyle{abbrv}
\bibliography{references}

@inproceedings{Ale72,
    author  = {Alekseev, V. M.},
    title   = {Quasirandom oscillations and qualitative problems in celestial mechanics},
    booktitle = {Ninth Mathematical Summer School (Kaciveli, 1971) (Russian)},
    pages   = {212--341},
    year    = {1972},
    note    = {Three papers on smooth dynamical systems}
}

@article{BTG14,
    author  = {Barutello, V. and Terracini, S. and Verzini, G.},
    title   = {Entire parabolic trajectories as minimal phase transitions},
    journal = {Calc. Var. Part. Diff. Equa.},
    volume  = {49},
    number  = {1-2},
    pages   = {391--429},
    year    = {2014}
}

@article{CHHL23,
    author  = {Cristofaro-Gardiner, D. and Hryniewicz, U. and Hutchings, M. and Liu, H.},
    title   = {Contact three-manifolds with exactly two simple {R}eeb orbits},
    journal = {Geom. Topo.},
    volume  = {27},
    number  = {9},
    pages   = {3801--3831},
    year    = {2023}
}

@article{CHHL26,
    author  = {Cristofaro-Gardiner, D. and Hryniewicz, U. and Hutchings, M. and Liu, H.},
    title   = {Proof of {H}ofer-{W}ysocki-{Z}ehnder's two or infinity conjecture},
    journal = {J. Am. Math. Soc.},
    year    = {2026},
    note    = {online first}
}

@article{GMC73,
    author  = {Gutzwiller, M. C.},
    title   = {The anisotropic {K}epler problem in two dimensions},
    journal = {J. Math. Phys.},
    volume  = {14},
    pages   = {139--152},
    year    = {1973}
}

@book{GMC1990,
    author  = {Gutzwiller, M. C.},
    title   = {Chaos in Classical and Quantum Mechanics},
    volume  = {1},
    series  = {Interdisciplinary Applied Mathematics},
    publisher = {Springer-Verlag},
    address = {New York},
    year    = {1990}
}

@article{HLOQS26,
    author  = {Hu, X. and Liu, L. and Ou, Y. and Qiao, Z. and Salom\~{a}o, P. A. S.},
    title   = {{ECH} constraints and twist dynamics in the spatial isosceles three-body problem},
    journal = {Geom. Topol.},
    year    = {2026},
note={arXiv:2602.24025, to appear in G\&T}
}

@article{HLOYS23,
    author  = {Hu, X. and Liu, L. and Ou, Y. and Salom\~{a}o, P. A. S. and Yu, G.},
    title   = {A symplectic dynamics approach to the spatial isosceles three-body problem},
    journal = {J. Euro. Math. Soci. (2025), online first},
    year    = {2025}
}

@article{HO16,
    author  = {Hu, X. and Ou, Y.},
    title   = {Collision index and stability of elliptic relative equilibria in planar \(n\)-body problem},
    journal = {Comm. Math. Phys.},
    volume  = {348},
    number  = {3},
    pages   = {803--845},
    year    = {2016}
}

@article{HOQ26,
    author  = {Hu, X. and Ou, Y. and Qiao, Z.},
    title   = {Relative periodic orbits in the spatial anisotropic {K}epler problem},
    journal = {Discrete Contin. Dyn. Syst.},
    year    = {2026},
    volume  = {52},
pages   = {434--447}
}

@article{HOT23,
    author  = {Hu, X. and Ou, Y. and Tang, X.},
    title   = {Linear stability of an elliptic relative equilibrium in the spatial \(n\)-body problem via index theory},
    journal = {Regu. Chao. Dyna.},
    volume  = {28},
    pages   = {731--755},
    year    = {2023}
}

@article{HQY25,
    author  = {Hu, X. and Qiao, Z. and Yu, G.},
    title   = {Relative periodic solutions in spatial {K}epler problem with symmetric perturbation},
    journal = {Nonlinearity},
    volume  = {39},
    year    = {2026},
    pages   = {025001}
}

@article{HWY,
    author  = {Hu, X. and Wu, L. and Yang, R.},
    title   = {Morse index theorem of {L}agrangian systems and stability of brake orbit},
    journal = {J. Dyna. Diff. Equa.},
    volume  = {32},
    number  = {1},
    pages   = {61--84},
    year    = {2020}
}

@article{HY18,
    author  = {Hu, X. and Yu, G.},
    title   = {An index theory for zero energy solutions of the planar anisotropic {K}epler problem},
    journal = {Comm. Math. Phys.},
    volume  = {361},
    number  = {2},
    pages   = {709--736},
    year    = {2018}
}

@book{Lon02,
    author  = {Long, Y.},
    title   = {Index Theory for Symplectic Paths with Applications},
    volume  = {207},
    series  = {Progress in Mathematics},
    publisher = {Birkh\"{a}user Verlag},
    address = {Basel},
    year    = {2002}
}

@book{MW1966,
    author  = {Magnus, W. and Winkler, S.},
    title   = {Hill's Equation},
    publisher = {Interscience Publishers, Wiley},
    address = {New York},
    year    = {1966}
}

@article{M84,
    author  = {Moeckel, R.},
    title   = {Heteroclinic phenomena in the isosceles three-body problem},
    journal = {SIAM J. Math. Anal.},
    volume  = {15},
    number  = {5},
    pages   = {857--876},
    year    = {1984}
}

@article{Sit60,
    author  = {Sitnikov, K.},
    title   = {The existence of oscillatory motions in the three-body problems},
    journal = {Sovi. Phys. Dokl.},
    volume  = {5},
    pages   = {647--650},
    year    = {1960}
}

@article{Y25,
    author  = {Yu, G.},
    title   = {Positive energy solutions in the anisotropic {K}epler problem with homogeneous potential},
    journal = {Commun. Math. Phys.},
volume={407},
    year    = {2026}
}

@article{davies1983classical,
    author  = {Davies, I. and Truman, A. and Williams, D.},
    title   = {Classical periodic solutions of the equal-mass \(2n\)-body problem, \(2n\)-ion problem and the \(n\)-electron atom problem},
    journal = {Phys. Lett. A},
    volume  = {99},
    number  = {1},
    pages   = {15--18},
    year    = {1983}
}

@article{Chenciner2000,
    author  = {Chenciner, A. and Venturelli, A.},
    title   = {Minima de l'int\'{e}grale d'action du probl\`{e}me newtonien de 4 corps de masses \'{e}gales dans {{\(\mathbf{R}^3\)}}: orbites ``hip-hop''},
    journal = {Cele. Mech. Dyna. Astr.},
    volume  = {77},
    number  = {2},
    pages   = {139--152},
    year    = {2000}
}

@article{Terracini2007,
    author  = {Terracini, S. and Venturelli, A.},
    title   = {Symmetric trajectories for the $2N$-body problem with equal masses},
    journal = {Arch. Rati. Mech. Anal.},
    volume  = {184},
    number  = {3},
    pages   = {465--493},
    year    = {2007}
}

@article{Barrabes2006,
    author  = {Barrab\'{e}s, E. and Cors, J. M. and Pinyol, C. and Soler, J.},
    title   = {Hip-hop solutions of the \(2N\)-body problem},
    journal = {Cele. Mech. Dyna. Astr.},
    volume  = {95},
    number  = {1--4},
    pages   = {55--66},
    year    = {2006}
}

@article{Barrabes2010,
    author  = {Barrab\'{e}s, E. and Cors, J. M. and Pinyol, C. and Soler, J.},
    title   = {Highly eccentric hip-hop solutions of the \(2N\)-body problem},
    journal = {Physica D},
    volume  = {239},
    number  = {5},
    pages   = {214--219},
    year    = {2010}
}

@article{MSS06,
	author  = {Mart\'{i}nez, R. and Sam\`{a}, C. and Sim\'{o}, C.},
	title   = {Analysis of the stability of a family of singular-limit linear periodic systems in {{\(\mathbb{R}^4\)}}. {A}pplications},
	journal = {J. Diff. Equa.},
	volume  = {226},
	number  = {2},
	pages   = {652--686},
	year    = {2006}
}

@article {GLV13,
    AUTHOR = {Guirao, Juan L. G. and Llibre, Jaume and Vera, Juan A.},
     TITLE = {Periodic orbits of {H}amiltonian systems: applications to
              perturbed {K}epler problems},
   JOURNAL = {Chaos Solitons Fractals},
  FJOURNAL = {Chaos, Solitons \& Fractals},
    VOLUME = {57},
      YEAR = {2013},
     PAGES = {105--111},
      ISSN = {0960-0779,1873-2887},
   MRCLASS = {70H12 (70F05 70F15)},
  MRNUMBER = {3128604},
MRREVIEWER = {Shanzhong\ Sun},
       DOI = {10.1016/j.chaos.2013.09.003},
       URL = {https://doi.org/10.1016/j.chaos.2013.09.003},
}

@article {LM12,
    AUTHOR = {Llibre, Jaume and Makhlouf, Ammar},
     TITLE = {Periodic orbits of the spatial anisotropic {M}anev problem},
   JOURNAL = {J. Math. Phys.},
  FJOURNAL = {Journal of Mathematical Physics},
    VOLUME = {53},
      YEAR = {2012},
    NUMBER = {12},
     PAGES = {122903, 7},
      ISSN = {0022-2488,1089-7658},
   MRCLASS = {70F05 (70K42)},
  MRNUMBER = {3058197},
MRREVIEWER = {Pura\ Vindel},
       DOI = {10.1063/1.4771902},
       URL = {https://doi.org/10.1063/1.4771902},
}

@book {HZ94,
    AUTHOR = {Hofer, Helmut and Zehnder, Eduard},
     TITLE = {Symplectic invariants and {H}amiltonian dynamics},
    SERIES = {Birkh\"auser Advanced Texts: Basler Lehrb\"ucher.
              [Birkh\"auser Advanced Texts: Basel Textbooks]},
 PUBLISHER = {Birkh\"auser Verlag, Basel},
      YEAR = {1994},
     PAGES = {xiv+341},
      ISBN = {3-7643-5066-0},
   MRCLASS = {58-02 (34C25 57R15 58E05 58F05 70H05)},
  MRNUMBER = {1306732},
MRREVIEWER = {Daniel\ M.\ Burns, Jr.},
       DOI = {10.1007/978-3-0348-8540-9},
       URL = {https://doi.org/10.1007/978-3-0348-8540-9},
}

@article{SakaShib26,
    author  = {Shu Sakaguchi and Mitsuru Shibayama},
    title   = {A Minimax Approach to Relative Periodic Orbits in Symmetric Three-Degree-of-Freedom {H}amiltonian Systems},
    year    = {2026},
note={arXiv:2607.00517},
}

	\end{document}